\newcommand*{\myproofname}{Proof}
\newcommand*\xor{\oplus}
\def\qed{\hfill\ifhmode\unskip\nobreak\fi\qquad\ifmmode\Box\else\hfill$\Box$\fi}
\newlength\figureheight 
\newlength\figurewidth 
\newtheorem{theorem}{Theorem}
\newtheorem{corollary}[theorem]{Corollary}
\newtheorem{lemma}[theorem]{Lemma}
\newtheorem{claim}[theorem]{Claim}
\newtheorem{proposition}[theorem]{Proposition}
\newtheorem{remark}[theorem]{Remark}
\theoremstyle{definition}
\newtheorem{defn}[theorem]{Definition}
\newtheorem{example}[theorem]{Example}
\theoremstyle{remark}
\title{A combinatorial proof for the secretary problem with multiple choices}
\author{
Xujun Liu\thanks{Department of Foundational Mathematics, Xi'an Jiaotong-Liverpool University, Suzhou, Jiangsu Province, 215123, China, xujun.liu@xjtlu.edu.cn} \and
Olgica Milenkovic\thanks{Department of Electrical and Computer Engineering, University of Illinois, Urbana-Champaign, Urbana, IL, 61801, USA, milenkov@illinois.edu} \and
George V. Moustakides\thanks{Department of Electrical and Computer Engineering, University of Patras, Rio, 26500, Greece, moustaki@upatras.gr}
 }
\begin{document}
\maketitle

\begin{abstract} 

The Secretary problem is a classical sequential decision-making question that can be succinctly described as follows: a set of rank-ordered applicants are interviewed sequentially for a single position. Once an applicant is interviewed, an immediate and irrevocable decision is made if the person is to be offered the job or not and only applicants observed so far can be used in the decision process. The problem of interest is to identify the stopping rule that maximizes the probability of hiring the highest-ranked applicant. A multiple choice version of the Secretary problem, known as the Dowry problem, assumes that one is given a fixed integer budget for the total number of selections allowed to choose the best applicant. It has been solved using tools from dynamic programming and optimal stopping theory. We provide the first combinatorial proof for a related new \emph{query-based model} for which we are allowed to solicit the response of an expert to determine if an applicant is optimal. Since the selection criteria differ from those of the Dowry problem we obtain nonidentical expected stopping times.

Our result indicates that an optimal strategy is the $(a_s, a_{s-1}, \ldots, a_1)$-strategy, i.e., for the $i^{th}$ selection, where $1 \le i \le s$ and $1 \le j = s+1-i \le s$, we reject the first $a_j$ applicants, wait until the decision of the $(i-1)^{th}$ selection (if $i \ge 2$), and then accept the next applicant whose qualification is better than all previously appeared applicants. Furthermore, our optimal strategy is right-hand based, i.e., the optimal strategies for two models with $s_1$ and $s_2$ selections in total ($s_1 < s_2$) share the same sequence $a_1, a_2, \ldots, a_{s_1}$ when it is viewed from the right. When the total number of applicants tends to infinity, our result agrees with the thresholds obtained by Gilbert and Mosteller. 

\end{abstract}

\section{Introduction}

The classical Secretary problem can be stated as follows: $N$ applicants apply for a single available Secretary position and the set of $N$ applicants can be ranked from the best to worst without ties according to their qualifications for the job. The applicants are presented sequentially and uniformly at random. Once an applicant is interviewed, an immediate decision is made on whether the person is accepted or rejected for the position and the decision cannot be revoked at a later time. Furthermore, only applicants observed so far can be used in the decision process. The problem of interest is to identify the best stopping rule, i.e., the rule that maximizes the probability of hiring the highest ranked applicant.

The Secretary problem was formally introduced by Gardner~\cite{G1, G2} and is considered a typical example in sequential analysis, optimization, and decision theory. Lindley~\cite{L1} established the best strategy using algebraic methods while Dynkin~\cite{D1} independently solved the problem by viewing the selection process as a Markov chain. For $N$ large enough, 
the solution to the problem turns out to be surprisingly simple: the employer needs to reject the first $N/e$ applicants, where $e$ is the base of the natural logarithm, and then accept the next applicant whose qualification is better than that of all previously observed applicants. The probability of identifying the best applicant tends to $1/e$ as $N$ tends to infinity.

The classical Secretary problem has been generalized in many directions~\cite{BIK1, BFGOR1, BFGOR2,CJMTUW1, F2, FW1, GM1, GM2, GKMN1, jones2020weighted, KS, LM1, LM2, MLM1, R1, soto}, including the Prophet inequality model. One such generalization, the Dowry problem (with multiple choices), introduced by Gilbert and Mosteller~\cite{GM2}, assumes that one is given a total of $s$ opportunities to select the best applicant, where $s \ge 1$. Using Dynkin's approach, Sakaguchi~\cite{S1} rigorously determined the optimal selection times for the Dowry setting. 

Motivated by recent works on learning with queries~\cite{AKB1,CPM1,MS1}, we introduce the problem of query-based sequential selection. In our new model, we assume that the decision making entity has a fixed number of $s-1$ opportunities to query an infallible expert. When an applicant is identified as the potentially highest ranked applicant after an exploration process, the expert provides an answer of the form ``the applicant is the best'' or ``the applicant is not the best''. If the answer is ``the applicant is the best'', then the sequential examination process terminates. However, if the expert responds ``the applicant is not the best'', a new exploration-exploitation stage begins as long as the query budget allows it. After the budget is exhausted, one is still allowed to make a final selection without consulting the expert. Therefore, with a budget of $s-1$ queries we can make at most $s$ selections. The goal is to find the strategy that maximizes the probability of selecting the best applicant.

Our new query-based model with the budget of $s-1$ queries is related to but different from the Dowry problem with $s$ selections. For both models, our results indicate that an optimal strategy is a $(a_s, a_{s-1}, \ldots, a_1)$-strategy, i.e., for the $i^{th}$ selection, where $1 \le i \le s$ and $1 \le j = s+1-i \le s$, we reject the first $a_j$ applicants, wait until the decision for the $(i-1)^{th}$ selection (if $i \ge 2$), and then accept the next applicant whose qualification is better than that of all previously seen applicants. Furthermore, the optimal strategies for the two models with $s_1$ and $s_2$ selections in total (w.l.o.g., $s_1 < s_2$) share the same sequence $a_1, a_2, \ldots, a_{s_1}$ when viewed from the right. When $N \to \infty$, our result agrees with the thresholds obtained by Gilbert and Mosteller~\cite{GM2}. On the other hand, the two models are very different from the perspective of the expected termination time, especially when the total number of selections is large. 

An important combinatorial method to study sequential problems under nonuniform ranking models was developed in a series of papers by Fowlkes and Jones~\cite{FJ19}, and Jones~\cite{jones2019,jones2020weighted}. For consistency, we use some of the notation and definitions from Jones~\cite{jones2020weighted} but also introduce a number of new concepts and combinatorial  proof techniques. In particular, finding recurrence relations for more than one selection is significantly more challenging than for the secretary problem, and the optimal strategies differ substantially from the classical ones as our results include multiple thresholds for stopping. 

The paper is organized as follows. Section~\ref{sec:preliminaries} introduces the relevant concepts, terminology and models used in the paper. The same section presents technical lemmas needed to establish our main results. Section~\ref{optimal} describes the optimal selection strategy, while Section~\ref{solve} describes the exact thresholds for the optimal strategy and the maximum probability of identifying the best applicant.

\section{Preliminaries}\label{sec:preliminaries}

The sample space is the set of all permutations of $N$ elements, i.e. the symmetric group $S_N$, with the underlying $\sigma$-algebra equal to the power set of $S_N$. The best applicant is indexed by $N$, the second-best applicant by $N-1,\ldots,$ and the worst applicant by $1$. In our model, there is a budget of $s-1$ queries ($s$ selections where $s \ge 1$). A permutation $\pi \in S_N$ is sampled from $S_N$ uniformly at random before the interview process. During the interview process, entries of $\pi$ are presented one-by-one from the left. The relative ordering of the presented positions of $\pi$ is the only information that can be used to decide whether to accept the current applicant.   


For a given permutation $\pi \in S_N$ drawn according to the uniform distribution, we say that a strategy \emph{wins} the game if it correctly identifies the best applicant within the $s$ selections when presented with $\pi$. The notion of a prefix is introduced to represent the current relative ordering of applicants.

\begin{defn}
Given a $\pi \in S_N$, the $k^{(th)}$ prefix of $\pi$, denoted by $\pi|_k,$ is a permutation in $S_k$ and it represents the relabelling of the first $k$ elements of $\pi$ according to their relative order. For example, if $\pi=[635124] \in S_6$ and $k=4$, then  $\pi|_4=[4231]$.
\end{defn}

\begin{defn}
Let $\sigma \in \bigcup\limits_{i = 1}^{N} S_i$ and assume that the length of the permutation, $|\sigma|$, equals $k$. A permutation $\pi \in S_N$ is said to be $\sigma$-prefixed if $\pi|_k = \sigma$. For example, $\pi=[165243] \in S_6$ is $\sigma=[1432]$-prefixed. 
\end{defn}

\begin{defn}
Let $\pi$ be $\sigma$-prefixed. We say that $\pi$ is {\em $\sigma$-winnable} if accepting the prefix $\sigma$, i.e. if accepting the $|\sigma|^{\text{th}}$ applicant when the order $\sigma$ is encountered identifies the best applicant with interview order $\pi$. More precisely, for $\sigma = [\sigma(1)\sigma(2) \cdots \sigma(k)]$, we have that $\pi$ is $\sigma$-winnable if $\pi$ is $\sigma$-prefixed and $\pi(k) = N$.
\end{defn}

\begin{defn}
A {\em left-to-right maximum} in a permutation $\pi \in S_N$ is a position whose value is the larger than all values to the left of the position. For example, if $\pi = [423516] \in S_6$, then the $1^{\text{st}}$, $4^{\text{th}}$, and $6^{\text{th}}$ position are left-to-right maximum,  since the value at each of the positions is larger than all values to their left.
\end{defn}

\begin{defn}
A permutation $\sigma \in \bigcup\limits_{i = 1}^{N} S_i$ is said to be {\em eligible} if it ends in a left-to-right maximum or has length $N$. For example, let $N = 6$, both $[1324]$ and $[165243]$ are eligible.
\end{defn}

Every strategy can be represented as a set of permutations (of possibly different lengths) that lead to an acceptance decision for the last applicant observed; such a set is called a {\em strike set}. More precisely, the selection process proceeds as follows: if the prefix we have seen so far is in the strike set, then we accept the current applicant and continue (provided a selection remains); if it does not belong to the strike set, we reject the current applicant and continue. For example, let $N=4$ and $s=1$. Then, the boxed set of permutations $A = \{[12], [213], [3124], [3214]\}$ in Figure~\ref{tree-1} is a strike set. The corresponding interview strategy may be summarized as follows: if the relative order of the applicants interviewed so far is in the set $A$, then accept the current applicant; otherwise, reject the current applicant. This turns out to be an optimal strategy with probability of successfully selecting the best applicant equal to $11/24$. Obviously, the strike set representing an optimal strategy only contains eligible permutations, since an optimal strategy only selects applicants that are left-to-right maximum. We also make use of $s$-strike sets defined below.

\begin{defn}
A set $X \subseteq \bigcup_{j = 1}^{N} S_j$ is called an {\em $s$-minimal set} if it is impossible to have $s+1$ elements $\alpha_1, \alpha_2, \ldots, \alpha_{s+1} \in X$ such that $\alpha_{i+1}$ is a prefix of $\alpha_i$, for all $i \in \{1, 2, \ldots, s\}$.
\end{defn}

\begin{defn}
A set of permutations $A \subseteq \bigcup\limits_{j = 1}^{N} S_j$ is called an {\em $s$-strike set} if it 

(i) It comprises prefixes that are eligible. 

(ii) The set $A$ is $s$-minimal. The set $A$ may contain elements $\alpha_1, \alpha_2, \ldots, \alpha_{s}$ such that $\alpha_{i+1}$ is a prefix of $\alpha_i$, for all $i \in \{1, 2, \ldots, s-1\}$.  In other words, based on an $s$-strike set one can make at most $s$ selections.

(iii) Every permutation in $S_N$ contains some element of $A$ as its prefix (i.e., given an $s$-strike set one can always make a selection based on its elements).	
\end{defn}


\begin{remark}
The $1$-strike set corresponds to the valid strike set defined in the paper of Jones~\cite{jones2020weighted} when only one choice is allowed. We use the term strike set whenever $s$, the number of total selections, is clear from the context.
\end{remark}

From the previous definition and the fact that we are allowed to make at most $s$ selections it follows that any optimal strategy for our problem can be represented by an $s$-strike set. A detailed analysis of this claim is provided in Lemma~\ref{expansion} and Theorem~\ref{winprob-2}.  For example, the set \{[1], [12], [213], [3124], [3214]\} in Figure~\ref{tree-1} is a $2$-strike set, which also represents an optimal strategy for the case $N = 4$ and $s=2$. See also Example~\ref{prefixtree}. 
Furthermore, for a permutation $\sigma$ of length $k$, where $1 \le k \le N$, and $i \in \{1, 2, \ldots, s\}$, we make use of the following probabilities:



$Q_i(\sigma)$: The probability of identifying the best
applicant with the strategy accepting the $k$-th position
and using the best strategy thereafter \textbf{conditioned on}
the pre-selected interviewing order $\pi$ being $\sigma$-prefixed and
$i$ selections still being available when interviewing the applicant at position $k$.

$Q_i^o(\sigma)$: The probability of identifying the best
applicant with the best strategy after making a decision
for the $k$-th position \textbf{conditioned on} the pre-selected interviewing order $\pi$ being $\sigma$-prefixed and $i$ selections still being available right after the interview of the $k$-th applicant.

$\bar{Q}_i(\sigma)$: $\bar{Q}_i(\sigma) = \max\{Q_i(\sigma), Q^o_i(\sigma)\}.$


Intuitively, $Q$ represents the probability of winning by accepting the current applicant while $Q^o$ is the probability of winning based on future selections in the interview process. In order to ensure the maximum probability of winning, an optimal strategy will examine two available choices, i.e. ``accept the current applicant'' or ``reject the current applicant and implement the best strategy in the future'' at each stage of the interview and select the one with a better chance of identifying the best applicant.

\begin{defn}
Let $\sigma$ be a permutation of length $k$, where $1 \le k \le N$. The {\em standard denominator} of $\sigma$, denoted by $SD(\sigma)$, is defined to be the number of  $\sigma$-prefixed permutations $ \pi \in S_N$. For simplicity, we denote the number of  $\sigma$-winnable permutations $ \pi \in S_N$ by $Win(\sigma)$.
\end{defn}

\begin{defn}
The $\xor$ operation for $\frac{a}{b}$ and $\frac{c}{d}$ is defined as $\frac{a}{b} \xor \frac{c}{d} = \frac{a+c}{b+d}$. It is used to compute the probability of the union of two disjoint events from two disjoint sample spaces over a new sample space equal to the union of the sample spaces.
\end{defn}

\begin{defn}\label{children}
	For each $\sigma$ of length $\ell-1$, where $2 \le \ell \le N$, we define $\lambda_j(\sigma)$, $1 \le j \le \ell$, to be the $\sigma$-prefixed permutation of length $\ell$ such that its last position has value $j$ after relabelling according to the first $\ell-1$ positions of $\sigma$. For example, for $\sigma = [123],$ a permutation of length $3$, we have $\lambda_1(\sigma) = [2341], \lambda_2(\sigma) = [1342], \lambda_3(\sigma) = [1243]$ and $\lambda_4(\sigma) = [1234]$.
\end{defn}


Let $\sigma$ be a permutation of length $1 \le k \le N$ with $Q_i(\sigma), Q^o_i(\sigma), \bar{Q}_i(\sigma)$ defined as above with $1 \leq i \leq s$ selections available right before processing the $k$-th applicant of a $\sigma$-prefixed permutation. For each $1 \le i \le s$, if the $k$-th position of $\sigma$ is selected, then the number of selections available decrease by one; if the $k$-th applicant is rejected, then the number of selections available does not change. When the number of available selections becomes zero or all applicants are examined, the process terminates.

\begin{remark}\label{key}
The case when $i = 1$ is a corollary of a theorem by Jones~\cite{jones2020weighted}. Note that we do not simplify the fractions in the expressions for the probabilities $Q,Q^o, \bar{Q}$ until we solve the problem (for example, if the numerator and denominator of $Q_1(\sigma) = \frac{\text{Win}(\sigma)}{\text{SD}(\sigma)}$ have a common divisor $d$, we do not cancel $d$). After making a decision on the $|\sigma|$-th applicant, the interviewer examines the next applicant while the relative order of the interviewed applicants changes to one of $\lambda_1(\sigma), \ldots, \lambda_{k+1}(\sigma)$. An optimal strategy involves making a decision with the largest probability of winning when encountering each of the $\lambda_1(\sigma), \ldots, \lambda_{k+1}(\sigma)$. Thus, 

\begin{equation}\label{basicq0o}
Q^o_1(\sigma) = \bar{Q}_1(\lambda_1(\sigma)) \xor \cdots \xor \bar{Q}_1(\lambda_{k+1}(\sigma)) \quad \text{ and } \quad SD(\sigma) = \sum\limits_{j = 1}^{k+1} SD(\lambda_{j}(\sigma)).
\end{equation}
\end{remark}

For each $i \in \{2, \ldots, s\}$, we know by the definitions of $Q_i, Q^o_i$ and $\bar{Q}_i$ that
\begin{equation}\label{qqoqbar}
\bar{Q}_i(\sigma) = \max\{Q_i(\sigma), Q_i^o(\sigma)\} \text{ and }Q_i(\sigma) = Q_{1}(\sigma) + Q_{i-1}^o(\sigma).
\end{equation}

The second equation of~\eqref{qqoqbar} holds since there are two (disjoint) events that ensure winning after examining the current applicant, i.e., (a) the current applicant is the best and (b) the current applicant is not the best but we identify the best applicant at a later time with a best strategy after rejecting the current applicant. In the first case, the probability of successfully identifying the best applicant is $Q_1(\sigma)$; in the second case, the number of available selections decreases by one and the corresponding probability is $Q_{i-1}^o(\sigma)$.

\begin{proposition}\label{basicexpansion}
Let $\sigma$ be a permutation of length $k$ with $1 \le k \le N-1$ and let $2 \le i \le s$. Then 
\begin{equation}\label{basicqio}
Q^o_i(\sigma) = \bar{Q}_i(\lambda_1(\sigma)) \xor \cdots \xor \bar{Q}_i(\lambda_{k+1}(\sigma)).
\end{equation}
\end{proposition}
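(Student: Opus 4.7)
The plan is to extend the argument giving~\eqref{basicq0o} for $i=1$ to general $i\ge 2$ by partitioning the sample space of $\sigma$-prefixed permutations according to the relative rank of the $(k+1)$-th entry, and then invoking the optimality principle at that next position. Note that $Q^o_i(\sigma)$ is a conditional probability computed on the event that $\pi$ is $\sigma$-prefixed \emph{and} that $i$ selections remain right after the $k$-th applicant has been processed; since rejecting does not consume a selection, this is exactly the situation reached after a rejection at position $k$.

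First, I would invoke Definition~\ref{children} to partition the set of $\sigma$-prefixed permutations in $S_N$ into the $k+1$ disjoint subclasses of $\lambda_j(\sigma)$-prefixed permutations, for $j=1,\ldots,k+1$, indexed by the relative rank of the value appearing at position $k+1$. This immediately yields the denominator identity $SD(\sigma)=\sum_{j=1}^{k+1} SD(\lambda_j(\sigma))$, which is precisely the denominator that the $\xor$-operation on the right-hand side produces.

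Second, I would restrict attention to a single class of $\lambda_j(\sigma)$-prefixed permutations. After the $(k+1)$-th applicant has been examined, the interviewer stands at prefix $\lambda_j(\sigma)$ with exactly $i$ selections still available. By definition, the best strategy at this moment must choose between accepting, which wins with conditional probability $Q_i(\lambda_j(\sigma))$, and rejecting and proceeding optimally, which wins with conditional probability $Q^o_i(\lambda_j(\sigma))$; hence the conditional winning probability in this class equals $\bar{Q}_i(\lambda_j(\sigma))=\max\{Q_i(\lambda_j(\sigma)),Q^o_i(\lambda_j(\sigma))\}$ by~\eqref{qqoqbar}. Consequently the number of $\lambda_j(\sigma)$-prefixed winning permutations under the optimal continuation equals $\bar{Q}_i(\lambda_j(\sigma))\cdot SD(\lambda_j(\sigma))$.

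Summing these counts over $j$ and dividing by $SD(\sigma)=\sum_j SD(\lambda_j(\sigma))$ yields $Q^o_i(\sigma)$ on the left, and by definition of $\xor$ this is exactly $\bar{Q}_i(\lambda_1(\sigma))\xor\cdots\xor\bar{Q}_i(\lambda_{k+1}(\sigma))$. The only subtle point I expect to need careful bookkeeping — and hence the main obstacle — is ensuring the selection budget is treated consistently: the index $i$ on the left refers to selections available \emph{after} processing position $k$, while the index $i$ on the right refers to selections available \emph{at} position $k+1$; these coincide precisely because the $k$-th applicant was rejected rather than accepted, so no budget was spent between the two moments. With this alignment in place, the statement follows as a direct application of the total probability law combined with the $\xor$-operation on disjoint sub-sample-spaces.
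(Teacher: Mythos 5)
Your proposal is correct and follows essentially the same route as the paper: decompose the $\sigma$-prefixed sample space into the $\lambda_j(\sigma)$-prefixed classes, observe $SD(\sigma)=\sum_j SD(\lambda_j(\sigma))$, and apply the optimality principle at position $k+1$ so that each class contributes $\bar{Q}_i(\lambda_j(\sigma))\cdot SD(\lambda_j(\sigma))$ winning permutations, which is exactly what the $\xor$ encodes. Your extra bookkeeping on the selection budget is sound (though note the conditioning in the definition of $Q_i^o$ is simply that $i$ selections remain after the decision at position $k$, not specifically that the $k$-th applicant was rejected), and the paper's proof is just a more compressed version of the same counting argument.
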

\begin{proof}
The case of $i = 1$ is covered by~\eqref{basicq0o}. By the definition of the $Q_i^o$ probabilities, we know that $Q_i^o(\sigma)$ can be expressed as a fraction with denominator $SD(\sigma)$ and the numerator equal to the number of $\sigma$-prefixed permutations such that the best applicant can be selected using an optimal strategy right after making a decision at the $|\sigma|^{\text{th}}$ applicant in $\pi$, conditioned on the event that there are $i$ available selections left after making a decision for the $|\sigma|^{\text{th}}$ applicant in $\pi$. Since we will see one of $\lambda_1(\sigma), \ldots, \lambda_{k+1}(\sigma)$ after making a decision at the $k^{\text{th}}$ applicant in $\pi$ and an optimal strategy would make the decision with largest probability of winning at each of $\lambda_1(\sigma), \ldots, \lambda_{k+1}(\sigma)$, the stated equation~\eqref{basicqio} holds.
\end{proof}


Following a methodology suggested by Jones~\cite{jones2020weighted}, in our proof we make extensive use of \textit{prefix trees} which naturally capture the inclusion-relationship between prefixes of permutations. The concept is best described by an illustrative example, depicted for the case of $S_4$ in Fig.\,\ref{tree-1}. The correspondence between sub-trees/sub-forests is crucial for the proof of Lemma~\ref{only length}. For example, let $F_1$ be the sub-forest obtained by deleting the vertex $[12]$ in the tree induced by $[12]$ and its children, let $F_2$ be the sub-forest obtained by deleting the vertex $[21]$ in the tree induced by $[21]$ and its children. Then, there is a bijection between $F_1$ and $F_2$ which preserves all the probabilities used in evaluating optimal strategies.

\begin{defn}
Let $T$ be the tree formed by the inclusion-relationship between prefixes of permutations of length at most $N$. In other words, for $V$ being the collection of all permutations of length at most $N$, we let $T = (V, E)$ be such that if $\sigma, \tau \in V$ and $\sigma$ is a prefix of $\tau$ with $|\sigma| = |\tau| - 1$, then we have an edge $\sigma \tau \in E$. We define $\bar{T}(\sigma)$ to be the subtree in $T$ comprising $\sigma$ and its children and let $T^o(\sigma) = \bar{T}(\sigma) - \sigma$ be the forest obtained by deleting $\sigma$ from $\bar{T}(\sigma)$. For example, in Figure~\ref{tree-1}, if $\sigma = [12]$, then $\bar{T}(\sigma)$ is the subtree induced by the vertices $$\{[12], [123], [132], [231], [1234], [1243], [1342], [2341], [1324], [1423], [1432], [2431], [2314], [2413], [3412], [3421]\}$$  and  $T^o(\sigma)$ is  the forest induced by the vertices $$\{[123], [132], [231], [1234], [1243], [1342], [2341], [1324], [1423], [1432], [2431], [2314], [2413], [3412], [3421]\}.$$
\end{defn}

\begin{defn}
We say that a prefix $\sigma$ is {\em type $i$-positive} if $Q_i(\sigma) \ge Q^o_i(\sigma)$ and {\em type $i$-negative} otherwise, where $i \in \{1, \ldots, s\}$.
\end{defn}

On Proposition~\ref{probs-qiqio}, we show that the probabilities $Q_i^o, Q_i, \bar{Q}_i$ for each $i \in \{1, \ldots, s\}$ can be calculated (pre-calculated) using a sequential procedure (backward induction). It is crucial and will be used to show the optimal strategy is a positional strategy with $s$ thresholds. After knowing this fact, we will find the winning probability in Section~\ref{solve} by solving a recurrence relation instead.

\begin{proposition}\label{probs-qiqio}
Let $\tau$ be any permutation of length $k$, where $1 \le k \le N$. The probabilities $Q_i^o(\tau), Q_i(\tau), \bar{Q}_i(\tau)$ for each $i \in \{1, \ldots, s\}$ can be calculated using a sequential procedure.
\end{proposition}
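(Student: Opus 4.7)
The plan is to argue by backward induction on the length $k = |\tau|$, combined with an inner induction on the selection index $i$, treating the already-established formulas (\ref{basicq0o}), (\ref{qqoqbar}), and Proposition~\ref{basicexpansion} as the engine that drives the sequential computation. Essentially, these identities already encode a dynamic-programming recursion; all that remains is to verify that the recursion is well-founded, i.e., that every appealed-to value is available at the moment it is needed, and to handle the base case that lies outside the scope of Proposition~\ref{basicexpansion}.

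For the base case $k = N$, the only $\tau$-prefixed permutation is $\tau$ itself, so $\text{SD}(\tau) = 1$ and $Q_1(\tau) = \text{Win}(\tau)/\text{SD}(\tau)$ equals $1$ if $\tau(N) = N$ and $0$ otherwise. Since no applicants remain to be interviewed after position $N$, the definition of $Q_i^o$ forces $Q_i^o(\tau) = 0$ for every $i \in \{1,\ldots,s\}$. Then (\ref{qqoqbar}) gives $Q_i(\tau) = Q_1(\tau) + Q_{i-1}^o(\tau) = Q_1(\tau)$ for $i \geq 2$, and $\bar{Q}_i(\tau) = \max\{Q_i(\tau), Q_i^o(\tau)\}$, so every probability at length $N$ is explicitly determined.

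For the inductive step $k < N$, assume that $Q_i^o(\tau')$, $Q_i(\tau')$, and $\bar{Q}_i(\tau')$ have been computed for every prefix $\tau'$ of length $k+1$ and every $i \in \{1,\ldots,s\}$. Given a prefix $\tau$ of length $k$, first compute $Q_1(\tau) = \text{Win}(\tau)/\text{SD}(\tau)$ directly from the combinatorial definitions, then compute $Q_1^o(\tau)$ from (\ref{basicq0o}) using the already-known $\bar{Q}_1(\lambda_j(\tau))$, and set $\bar{Q}_1(\tau) = \max\{Q_1(\tau), Q_1^o(\tau)\}$. Next, iterate through $i = 2, 3, \ldots, s$ in order: at stage $i$, the inner-induction hypothesis guarantees $Q_{i-1}^o(\tau)$ is already in hand, so (\ref{qqoqbar}) yields $Q_i(\tau) = Q_1(\tau) + Q_{i-1}^o(\tau)$; Proposition~\ref{basicexpansion} yields $Q_i^o(\tau)$ from the $\bar{Q}_i(\lambda_j(\tau))$; and then $\bar{Q}_i(\tau) = \max\{Q_i(\tau), Q_i^o(\tau)\}$. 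This exhausts the three probabilities at length $k$ for all indices $i$.

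The only real thing to watch is the dependency ordering. The quantity $Q_i(\tau)$ refers to $Q_{i-1}^o(\tau)$ at the \emph{same} length $k$, whereas $Q_i^o(\tau)$ refers to $\bar{Q}_i$ at length $k+1$; the nested induction (outer decreasing in $k$, inner increasing in $i$) is exactly what ensures every appealed-to value has already been computed when it is first invoked. No substantive obstacle arises: the content of the proposition is the existence and correctness of this scheduling, and once the base case at $k = N$ is dealt with by direct inspection, the rest is a straightforward bookkeeping argument that the recursion terminates and uniquely determines every probability $Q_i^o(\tau), Q_i(\tau), \bar{Q}_i(\tau)$.
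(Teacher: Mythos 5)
Your proof is correct and follows essentially the same double-induction, dynamic-programming argument as the paper, driven by the same identities \eqref{basicq0o}, \eqref{qqoqbar}, and Proposition~\ref{basicexpansion}. The only difference is that you take the backward induction on the length as the outer loop and the induction on $i$ as the inner loop, whereas the paper nests them the other way around; since each value at $(k,i)$ depends only on values at $(k+1,i)$ and $(k,i-1)$, both orderings are valid schedules and the arguments coincide in substance.
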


\begin{proof}
In order to compute the probabilities $Q_i^o(\tau), Q_i(\tau)$ for each $i \in \{1, \ldots, s\}$, we use double induction on the subscript $i$ and the length of the prefix. 

\textbf{Base case for outer induction on $i$:} We first show the base case which corresponds to $i = 1$. We first observe that the prefixes of length $N$ are type $1$-positive, which serves as a base case for inner induction on the length of a prefix. More precisely, for a permutation $\tau$ of length $N$, if $\tau(N) = N$ then $Q_1(\tau) = \bar{Q}_1(\tau) = 1$ and $Q^o_1(\tau) = 0$; if $\tau(N) < N$ then $Q_1(\tau) = Q^o_1(\tau) = \bar{Q}_1(\tau) = 0$. 

Assume that the probabilities $Q_1, Q^o_1, \bar{Q}_1$ for permutations of length longer than $k$, $1 \le k \le N-1$, are already known. We show that $Q_1^o(\tau), Q_1(\tau)$, and $\bar{Q}_1(\tau)$ can be calculated, where now $\tau$ is a permutation of length $k$. We know the value of $Q_1(\tau)$ can be obtained by finding a fraction where the denominator equal to the number of $\tau$-prefixed permutations of length $N$ and the numerator equal to the number of $\tau$-winnable permutations of length $N$, which can be obtained by counting permutations of length $N$ that has $\tau$ as a prefix and  $\tau$-prefixed permutations with value $N$ on the $|\tau|^{\text{th}}$ position. By~\eqref{basicq0o}, the probability $Q^o_1(\tau)$ can be obtained from $Q^o_1(\tau) = \xor_{j = 1}^{k+1} \bar{Q}_1(\lambda_{j}(\tau))$. Since each $\lambda_{j}(\tau)$ has length larger than that of $\tau$, each of $ \bar{Q}_1(\lambda_{j}(\tau))$ is already known by the inductive hypothesis. The $\bar{Q}_1(\tau)$ probabilities can be determined from $\bar{Q}_1(\tau) = \max\{Q_1(\tau), Q_1^o(\tau)\}$, where $Q_1(\tau)$ and $Q_1^o(\tau)$ are already obtained by previous arguments.

\textbf{Main proof after the base case:} Assume now that we know for each $1 \le q \le i-1$, $i \in \{1, \ldots, s\}$, the $Q_q, Q^o_q, \bar{Q}_q$ probabilities for every permutation in $\bigcup\limits_{j = 1}^{N} S_j$. We prove the result for $i$. The probabilities $Q_i, \bar{Q}_i$ of prefixes of length $N$ take either the value $1$ or $0$ depending on whether the last position has value $N$, while the probabilities $Q^o_i$ are all $0$. This serves as the base case for the inner induction argument on the length of the prefixes. 

Let $\tau$ be a permutation of length $k$, where $1 \le k \le N-1$. Given the probabilities $Q_i, Q_i^o, \bar{Q}_i$ for prefixes of lengths greater than $k$, the $Q_i^o(\tau)$ probabilities for prefixes $\tau$ of length $k$ can be obtained by $Q_i^o(\tau) = \xor_{j = 1}^{k+1} \bar{Q}_i(\lambda_{j}(\tau))$ from Proposition~\ref{basicexpansion}, where the probabilities $\bar{Q}_i(\lambda_{j}(\tau))$ are known by the inductive hypothesis. Moreover, we have $Q_i(\tau) = Q_1(\tau) + Q_{i-1}^o(\tau)$, and all these probabilities are available by the inductive hypothesis and from the base case. Furthermore, $\bar{Q}_i(\tau) = \max\{Q_i(\tau), Q^o_i(\tau) \}$.
\end{proof}

Recall that $Q_1(\sigma)$ can be written as a fraction with denominator $SD(\sigma)$ and the numerator equal to the number of permutations $\pi \in S_N$ that are $\sigma$-winnable. In Propositions~\ref{easyexpansion} and Lemma~\ref{expansion} below we show that the probabilities $Q_i(\sigma)$, where $2 \le i \le s$, and $Q^o_i(\sigma)$, where $1 \le i \le s$, can also be expressed as fractions with the standard denominator $SD(\sigma)$ (and then evaluate the numerator). To this end, note that for a permutation $\sigma$ of length at least $N-i$, with $0 \le i \le s-1$, 
$Q_{i+1}(\sigma) = \ldots = Q_{s}(\sigma)$. Furthermore, for a permutation $\sigma$ of length at least $N-i$, with $1 \le i \le s-1$, $Q^o_{i}(\sigma) = Q_i^o(\sigma) = \ldots = Q_{s}^o(\sigma)$.

We first show in Proposition~\ref{easyexpansion} that $Q_i^o(\sigma)$ can be expressed as a special sum (using the $\xor$ notation) of the $Q_i$ probabilities of some longer permutations. We express it using the basic probability $Q_1$ in Lemma~\ref{expansion}.

\begin{proposition}\label{easyexpansion}
For each $1 \le i \le s$ and a permutation $\sigma$ of length $\ell$ with $\ell \le N-1-i$, there exists a collection of $\sigma$-prefixed permutations $\Gamma_{\sigma, i}$ such that each $\mu \in \Gamma_{\sigma, i}$ is of length larger than $|\sigma|$ and type $i$-positive, the set $\Gamma_{\sigma, i}$ is $1$-minimal, and $Q^o_i(\sigma) = \xor_{\mu \in \Gamma_{\sigma, i}} Q_i(\mu)$, i.e.,
\begin{equation}\label{qiqo-1}
Q^o_i(\sigma) \cdot SD(\sigma) = \sum\limits_{\mu \in \Gamma_{\sigma, i}} Q_i(\mu) \cdot SD(\mu) \quad \text{ and } \quad SD(\sigma) = \sum\limits_{\mu \in \Gamma_{\sigma,i}} SD(\mu). 
\end{equation}
By~\eqref{qqoqbar}, we know that
\begin{equation}\label{qisd-1}
Q_i(\sigma) \cdot SD(\sigma) = Q_1(\sigma) \cdot SD(\sigma) + \sum\limits_{\mu \in \Gamma_{\sigma, i-1}} Q_{i-1}(\mu) \cdot SD(\mu).
\end{equation}
\end{proposition}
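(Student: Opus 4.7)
The plan is to build $\Gamma_{\sigma,i}$ by an iterative expansion that starts from Proposition~\ref{basicexpansion}, which supplies the identity $Q^o_i(\sigma) = \bar{Q}_i(\lambda_1(\sigma)) \xor \cdots \xor \bar{Q}_i(\lambda_{\ell+1}(\sigma))$. I would initialise a working set $B = \{\lambda_1(\sigma), \ldots, \lambda_{\ell+1}(\sigma)\}$ and $\Gamma_{\sigma,i} = \emptyset$, and then repeatedly pick $\phi \in B$: if $\phi$ is type $i$-positive then $\bar{Q}_i(\phi) = Q_i(\phi)$, so move $\phi$ from $B$ into $\Gamma_{\sigma,i}$; if $\phi$ is type $i$-negative then $\bar{Q}_i(\phi) = Q^o_i(\phi)$, and another application of Proposition~\ref{basicexpansion} lets me replace $\phi$ in $B$ by its children $\lambda_1(\phi), \ldots, \lambda_{|\phi|+1}(\phi)$ without changing the overall $\xor$.

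Termination follows from the base case of Proposition~\ref{probs-qiqio}: every prefix $\phi$ of length $N$ satisfies $Q^o_i(\phi) = 0 \le Q_i(\phi)$ and hence is type $i$-positive, so no element of length $N$ is ever expanded; since each expansion strictly increases the length of the replaced elements, the process must halt. The three claimed properties of $\Gamma_{\sigma,i}$ are then immediate from the construction: each $\mu \in \Gamma_{\sigma,i}$ is type $i$-positive (only type $i$-positive prefixes are moved in); each $\mu$ has length strictly greater than $|\sigma|$ (we start with prefixes of length $|\sigma|+1$ and only extend further); and $\Gamma_{\sigma,i}$ is $1$-minimal because any prefix placed in $\Gamma_{\sigma,i}$ is never expanded again, so no strict extension of it can ever reach $\Gamma_{\sigma,i}$.

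To deduce~\eqref{qiqo-1} I would unroll the $\xor$. Each expansion step preserves the running $\xor$, and upon termination we obtain $Q^o_i(\sigma) = \xor_{\mu \in \Gamma_{\sigma,i}} Q_i(\mu)$; the two equations in~\eqref{qiqo-1} then follow from the definition of $\xor$ as ``add numerators, add denominators'', combined with the invariant $SD(\phi) = \sum_j SD(\lambda_j(\phi))$ already recorded in Remark~\ref{key}. Equation~\eqref{qisd-1} is then just the identity $Q_i(\sigma) = Q_1(\sigma) + Q^o_{i-1}(\sigma)$ from~\eqref{qqoqbar} multiplied through by $SD(\sigma)$, combined with~\eqref{qiqo-1} applied at index $i-1$. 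I expect the only genuine subtlety to be the bookkeeping that every intermediate element of $B$ still has length at most $N$ (so that Proposition~\ref{basicexpansion} applies) and that the type $i$-positive/negative test used to drive the algorithm is well defined, both of which are guaranteed by the pre-computation in Proposition~\ref{probs-qiqio} and the length hypothesis $\ell \le N-1-i$.
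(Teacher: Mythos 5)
Your construction is essentially identical to the paper's: the same greedy prefix-tree expansion seeded with $B=\{\lambda_1(\sigma),\ldots,\lambda_{\ell+1}(\sigma)\}$ that replaces a type $i$-negative element by its children and collects type $i$-positive ones, the same termination argument via type $i$-positivity of length-$N$ prefixes, and the same derivation of~\eqref{qiqo-1} by unrolling the $\xor$ and of~\eqref{qisd-1} from $Q_i(\sigma)=Q_1(\sigma)+Q^o_{i-1}(\sigma)$. The only (harmless) difference is that the paper's acceptance test additionally requires $\phi$ to be \emph{eligible}; this is not needed for the identities stated in the proposition, but it is the property that later lets the sets $\Gamma_{\sigma,i}$ be assembled into a strike set, so it is worth keeping in mind.
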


\begin{proof}
The case $i = 1$ of Equation~\eqref{qiqo-1} was covered in the work of Jones~\cite{jones2020weighted} (described using the $\xor$ notation). Since $Q_2(\sigma) = Q_1(\sigma) + Q^o_1(\sigma)$, there is a set $\Gamma_{\sigma,1}$ such that
$$Q_2(\sigma) \cdot SD(\sigma) = Q_1(\sigma) \cdot SD(\sigma) + Q^o_1(\sigma) \cdot SD(\sigma) =  Q_1(\sigma) \cdot SD(\sigma) + \sum\limits_{\mu \in \Gamma_{\sigma,1}} Q_1(\mu) \cdot SD(\mu),$$
where $\Gamma_{\sigma,1}$ is $1$-minimal and consists of type $1$-positive permutations of length larger than $|\sigma|$.

After making a decision on the $|\sigma|^{\text{th}}$ applicant, an optimal strategy will examine the children of $\sigma$, i.e. $\lambda_{1}(\sigma), \ldots, \lambda_{\ell+1}(\sigma)$, in the prefix tree and then make a decision with the largest probability of winning. We describe the following algorithm to prove the proposition pertaining to $Q^o_i(\sigma)$. 

\begin{itemize}
\item[Initialization step:] Let $\Gamma_i = \varnothing$ and $B = \{\lambda_{1}(\sigma), \ldots, \lambda_{\ell+1}(\sigma)\}$. 

We next repeat the main step until the process terminates.

\item[Main step:] Check if $B = \varnothing$; if yes, stop and return the set $\Gamma_i$; if no, then do the following: Pick an arbitrary permutation $\phi \in B$, say of length $q$, with $|\sigma| < q \le N$; check if $\phi$ is both eligible and type $i$-positive ($Q_i(\phi) \ge Q_i^o(\phi)$); if yes, set $\Gamma_i = \Gamma_i \cup \phi$ and $B = B - \phi$; if no, do not update $\Gamma_i$ and let $B = (B - \phi) \cup \bigcup\limits_{j = 1}^{q+1} \lambda_{j}(\phi)$. Note that the probabilities $Q_i(\phi)$ and $Q_i^o(\phi)$ are known by Proposition~\ref{probs-qiqio}.
\end{itemize}

Since the permutations of length $N$ are type $i$-positive for each $1 \le i \le s$, the algorithm eventually terminates. By the criteria on the main step of the algorithm, it will produce a set $\Gamma_i$ of type $i$-positive eligible permutations that is $1$-minimal and each of the $\gamma \in \Gamma_i$ has length larger than $|\sigma|$. At the end of the process, $B$ is an empty set. To see this, we make the following two observations.

\textbf{Observation (i):} There is no pair of elements $\alpha,\beta \in \Gamma_i$ such that $\alpha$ is a prefix of $\beta$, i.e., $\Gamma_i$ contains $1$-minimal prefixes, since otherwise the sub-forest $T^o(\alpha)$ will not be processed by the algorithm and it will be impossible for $\beta$ to be selected for inclusion in $\Gamma_i$.

\textbf{Observation (ii):} Since we choose a permutation only if it is type $i$-positive and eligible, every permutation in $\Gamma_i$ is type $i$-positive and eligible. 

Furthermore, by the main step of the algorithm and the induction hypothesis, 
$$Q^o_i(\sigma) \cdot SD(\sigma) = \sum\limits_{\gamma \in \Gamma_i} Q_i(\gamma) \cdot SD(\gamma) \quad \text{ and } \quad SD(\sigma) = \sum\limits_{\gamma \in \Gamma_i} SD(\gamma).$$
Equivalently, if we divide by $SD(\sigma)$ on both sides, then
\begin{equation}\label{expansion-2}
Q^o_i(\sigma) = \xor_{\gamma \in \Gamma_i} Q_i(\gamma).
\end{equation}

To prove the corresponding formula for $Q_{i+1}(\sigma)$, note that $Q_{i+1}(\sigma) = Q_1(\sigma) + Q^o_i(\sigma)$ and invoke the result of~\eqref{qiqo-1} for $Q_i^o(\sigma)$. 
\end{proof}


\begin{lemma}\label{expansion}
For each $1 \le i \le s$ and a permutation $\sigma$ of length $\ell$ with $\ell \le N-1-i$, there exists a collection of $\sigma$-prefixed permutations $G^{\sigma, i}$ that can be partitioned into $\Gamma_1^{\sigma, i}, \ldots, \Gamma_i^{\sigma, i}$ such that each $\mu \in G^{\sigma, i}$ is of length larger than $|\sigma|$, the set $G^{\sigma, i}$ is $i$-minimal, each $\gamma \in \Gamma_j^{\sigma, i}$ is type $j$-positive where $1 \le j \le i$, and
\begin{equation}\label{qisd-2}
Q_i(\sigma) \cdot SD(\sigma) = Q_1(\sigma) \cdot SD(\sigma) + \sum\limits_{\mu \in G^{\sigma, i-1}} Q_1(\mu) \cdot SD(\mu), \quad \text{ and }
\end{equation}
\begin{equation}\label{qiqo-2}
Q^o_i(\sigma) \cdot SD(\sigma) = \sum\limits_{\gamma \in \Gamma^{\sigma, i}_i} Q_i(\gamma) \cdot SD(\gamma)
= \sum\limits_{\mu \in G^{\sigma, i}} Q_1(\mu) \cdot SD(\mu) \quad \text{ and } \quad SD(\sigma) = \sum\limits_{\gamma \in \Gamma^{\sigma,i}_i} SD(\gamma). 
\end{equation}
\end{lemma}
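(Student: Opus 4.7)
The plan is to proceed by induction on $i$, with Proposition~\ref{easyexpansion} (at $i=1$) providing the base case: set $G^{\sigma,1} := \Gamma_{\sigma,1}$ and view this as the one-block partition $\Gamma_1^{\sigma,1} := \Gamma_{\sigma,1}$. This gives a $1$-minimal family of type $1$-positive permutations, each of length strictly greater than $|\sigma|$, whose $Q_1$-contributions sum to $Q^o_1(\sigma)\cdot SD(\sigma)$; identity~\eqref{qisd-2} at $i=1$ collapses to the tautology $Q_1(\sigma)\cdot SD(\sigma) = Q_1(\sigma)\cdot SD(\sigma)$ upon interpreting the sum over $G^{\sigma,0}$ as empty.

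For the inductive step, assume the lemma holds at index $i-1$. First, apply Proposition~\ref{easyexpansion} at level $i$ to secure a $1$-minimal family $\Gamma_i^{\sigma,i} := \Gamma_{\sigma,i}$ of type $i$-positive permutations, each longer than $\sigma$, satisfying
\begin{equation*}
Q_i^o(\sigma)\cdot SD(\sigma) \;=\; \sum_{\gamma \in \Gamma_i^{\sigma,i}} Q_i(\gamma)\cdot SD(\gamma), \qquad SD(\sigma) \;=\; \sum_{\gamma \in \Gamma_i^{\sigma,i}} SD(\gamma).
\end{equation*}
Next, for each $\gamma \in \Gamma_i^{\sigma,i}$, combine the identity $Q_i(\gamma) = Q_1(\gamma) + Q_{i-1}^o(\gamma)$ from~\eqref{qqoqbar} with the inductive hypothesis applied to $\gamma$ at index $i-1$ to secure a family $G^{\gamma,i-1}$, partitioned as $\Gamma_1^{\gamma,i-1} \cup \cdots \cup \Gamma_{i-1}^{\gamma,i-1}$, with $Q_{i-1}^o(\gamma)\cdot SD(\gamma) = \sum_{\mu \in G^{\gamma,i-1}} Q_1(\mu)\cdot SD(\mu)$. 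Define $\Gamma_j^{\sigma,i} := \bigcup_{\gamma \in \Gamma_i^{\sigma,i}} \Gamma_j^{\gamma,i-1}$ for $1 \le j \le i-1$ and $G^{\sigma,i} := \bigcup_{j=1}^{i} \Gamma_j^{\sigma,i}$. Then~\eqref{qiqo-2} follows by direct substitution, while~\eqref{qisd-2} follows from $Q_i(\sigma) = Q_1(\sigma) + Q_{i-1}^o(\sigma)$ together with the inductive hypothesis applied directly to $\sigma$ at index $i-1$, which supplies $G^{\sigma,i-1}$.

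The main obstacle will be the verification that $G^{\sigma,i}$ is $i$-minimal, and I plan to handle it in two stages. First, each level $\Gamma_j^{\sigma,i}$ is itself $1$-minimal: within a single piece $\Gamma_j^{\gamma,i-1}$ this is inherited from the inductive hypothesis, and for distinct $\gamma,\gamma' \in \Gamma_i^{\sigma,i}$ the $1$-minimality of $\Gamma_i^{\sigma,i}$ forces $\gamma$ and $\gamma'$ to be incomparable in the prefix tree, so any descendants of $\gamma$ are incomparable with any descendants of $\gamma'$. Second, any prefix chain inside $G^{\sigma,i}$ can therefore contain at most one element from each of the $i$ levels $\Gamma_1^{\sigma,i},\ldots,\Gamma_i^{\sigma,i}$, bounding chain length by $i$ and yielding $i$-minimality. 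The remaining requirements---type $j$-positivity of each $\Gamma_j^{\sigma,i}$ (for $j=i$ directly from Proposition~\ref{easyexpansion}, for $j<i$ inherited from the induction) and the length condition $|\mu| > |\sigma|$ for every $\mu \in G^{\sigma,i}$ (by transitivity, $|\mu| > |\gamma| > |\sigma|$)---are inherited for free from the recursive construction, so once the stratified chain-length bound is in place the rest of the argument reduces to bookkeeping.
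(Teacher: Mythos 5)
Your proposal is correct and follows essentially the same route as the paper: both use Proposition~\ref{easyexpansion} to obtain the top layer $\Gamma_i^{\sigma,i}$, recursively expand each $\gamma$ in it via the inductive hypothesis at index $i-1$, and assemble $G^{\sigma,i}$ as the union of layers. Your explicit two-stage verification of $i$-minimality (each layer is $1$-minimal, so any prefix chain meets each layer at most once) is a detail the paper only asserts, and it is a welcome addition.
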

\begin{remark}
The $G^{\sigma, i}$ set for the probability $Q^o_i(\sigma)$ can be viewed as the strike set representing an optimal strategy for the problem of how to select the best applicant right after we made a decision on the $|\sigma|$th applicant and still have $i$ selections left. Each $\Gamma_j^{\sigma, i}$ can be viewed as the $j$th layer of the poset structure of $G^{\sigma, i}$ (formed by the prefix-relationship between permutations). Note that Equation~\eqref{qisd-2} implies that the numerator of $Q_i(\sigma)$, i.e. $Q_i(\sigma) \cdot SD(\sigma)$, can be interpreted as the sum of the number of $\sigma$-winnable permutations in $S_N$ and the number of $\mu$-winnable permutations in $S_N$, where $\mu \in G^{\sigma, i-1}$. Equation~\eqref{qiqo-2} implies the numerator of $Q_i^o(\sigma)$ can be interpreted in a similar manner.
\end{remark}
\begin{proof}
The case $i = 1$ of Equation~\eqref{qiqo-2} was covered in the work of Jones~\cite{jones2020weighted} (described using the $\xor$ notation). The case $i = 2$ of equation~\eqref{qisd-2} was already proved in Proposition~\ref{easyexpansion}.


Next, we assume the claim is true for $Q_i(\sigma)$ and $Q^o_{i-1}(\sigma)$, where $2 \le i \le s$, and then first prove~\eqref{qiqo-2} for $Q^o_i(\sigma)$ and then~\eqref{qisd-2} for $Q_{i+1}(\sigma)$ (except when $i = s$ since $Q_{s+1}(\sigma)$ is not defined). 

By Proposition~\ref{easyexpansion}, there is a collection $\Gamma_i$ of permutations longer than $\sigma$ and satisfy that 
$$Q^o_i(\sigma) \cdot SD(\sigma) = \sum\limits_{\gamma \in \Gamma_i} Q_i(\gamma) \cdot SD(\gamma) \quad \text{ and } \quad SD(\sigma) = \sum\limits_{\gamma \in \Gamma_i} SD(\gamma).$$

By the induction hypothesis, for each $\gamma \in \Gamma_i$ there exists a set $G^{\gamma, i-1}$ of $(i-1)$-minimal permutations of length larger than $|\gamma|$, which can be partitioned into $\Gamma^{\gamma,i-1}_1 \cup \ldots \cup \Gamma^{\gamma, i-1}_{i-1}$. In this case, each $\gamma_j \in \Gamma^{\gamma, i-1}_j$ is type $j$-positive, where $1 \le j \le i-1$, and
$$Q_i(\gamma) \cdot SD(\gamma) = Q_1(\gamma) \cdot SD(\gamma) + \sum\limits_{\mu \in G^{\gamma, i-1}} Q_1(\mu) \cdot SD(\mu).$$ 

Let $\Gamma^{\sigma, i}_i = \Gamma_i$ and $\Gamma^{\sigma, i}_{j} = \bigcup\limits_{\gamma \in \Gamma_i} \Gamma^{\gamma, i-1}_j,$ for $1 \le j \le i-1$. The set $G^{\sigma, i} := \bigcup\limits_{j = 1}^{i} \Gamma^{\sigma, i}_j$ comprises $i$-minimal permutations, and there is a partition of $G^{\sigma, i}$ into $\Gamma^{\sigma, i}_1, \ldots, \Gamma^{\sigma, i}_i$ such that each $\gamma \in \Gamma^{\sigma, i}_j$ is type $j$-positive, where $1 \le j \le i$, and equation~\eqref{qiqo-2} holds, i.e.,
 $$Q^o_i(\sigma) \cdot SD(\sigma) = \sum\limits_{\gamma \in \Gamma^{\sigma, i}_i} Q_i(\gamma) \cdot SD(\gamma)
= \sum\limits_{\mu \in G^{\sigma, i}} Q_1(\mu) \cdot SD(\mu).$$

To prove the corresponding formula for $Q_{i+1}(\sigma)$, note that $Q_{i+1}(\sigma) = Q_1(\sigma) + Q^o_i(\sigma)$ and invoke the result of equation~\eqref{qisd-2}. 
\end{proof}

\begin{theorem}\label{winprob-2}
There exists a strike set $A$ which can be partitioned as $A_s \cup \cdots \cup A_1,$  where each $A_i$ is a set of type $i$-positive $1$-minimal permutations , for $1 \le i \le s$,  and the maximum probability of winning equals
$\xor_{\sigma \in A_s} Q_s(\sigma)$. 
Furthermore, the probability of winning equals 
$$\frac{\sum\limits_{\sigma \in A} Q_1(\sigma) \cdot SD(\sigma)}{N!}.$$
\end{theorem}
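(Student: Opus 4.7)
The plan is to identify the optimal winning probability with $\bar Q_s([1])$ and then unfold it recursively using Lemma~\ref{expansion}. Because every $\pi \in S_N$ satisfies $\pi|_1 = [1]$, and the decision maker still has $s$ selections available right after the first applicant is interviewed, the maximum winning probability equals
\[
W_s \;=\; \bar Q_s([1]) \;=\; \max\{Q_s([1]),\, Q^o_s([1])\}.
\]
I split into two cases depending on the type of $[1]$ with respect to index $s$. In Case~1, where $[1]$ is type $s$-positive, the optimum accepts the first applicant; I set $A_s = \{[1]\}$ and apply Lemma~\ref{expansion} with $\sigma = [1]$ and $i = s-1$ to obtain $G^{[1], s-1}$, partitioned into layers $\Gamma^{[1], s-1}_j$ for $1 \le j \le s-1$, and set $A_j = \Gamma^{[1], s-1}_j$, $A = \{[1]\} \cup G^{[1], s-1}$. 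In Case~2, where $[1]$ is type $s$-negative, the optimum rejects the first applicant; I apply Lemma~\ref{expansion} with $\sigma = [1]$ and $i = s$ to obtain $G^{[1], s}$ with its natural partition $\Gamma^{[1], s}_j$, $1 \le j \le s$, and set $A_j = \Gamma^{[1], s}_j$, $A = G^{[1], s}$.

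Next I verify that $A$ is a valid $s$-strike set. Each $A_j$ consists of eligible, type $j$-positive, $1$-minimal permutations directly from the algorithmic constructions behind Proposition~\ref{easyexpansion} and Lemma~\ref{expansion}. The set $A$ itself is $s$-minimal: in Case~2 this is exactly the $s$-minimality of $G^{[1], s}$ from Lemma~\ref{expansion}; in Case~1 any chain of length $s+1$ in $A$ must place $[1]$ at the bottom (since $[1]$ has length $1$), so the other $s$ elements would form an $s$-chain inside the $(s-1)$-minimal set $G^{[1], s-1}$, a contradiction. The covering condition---every $\pi \in S_N$ has some element of $A$ as a prefix---is witnessed by $A_s$ alone: in Case~1 trivially, since $[1]$ prefixes every $\pi$, and in Case~2 through the identity $\sum_{\gamma \in A_s} SD(\gamma) = SD([1]) = N!$ supplied by \eqref{qiqo-2}, together with the $1$-minimality of $A_s$ (which makes the $SD(\gamma)$ counts disjoint).

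For the probability formulas, using $\sum_{\sigma \in A_s} SD(\sigma) = N!$ I obtain
\[
W_s \;=\; \bar Q_s([1]) \;=\; \frac{\sum_{\sigma \in A_s} Q_s(\sigma)\cdot SD(\sigma)}{N!} \;=\; \xor_{\sigma \in A_s} Q_s(\sigma),
\]
where the second equality is trivial in Case~1 and follows from \eqref{qiqo-2} in Case~2. Expanding each $Q_s(\sigma)\cdot SD(\sigma)$ through \eqref{qisd-2} as $Q_1(\sigma)\cdot SD(\sigma) + \sum_{\mu \in G^{\sigma, s-1}} Q_1(\mu)\cdot SD(\mu)$, summing over $\sigma \in A_s$, and observing that $A \setminus A_s = \bigcup_{\sigma \in A_s} G^{\sigma, s-1}$ (this follows in Case~2 from the recursive construction $\Gamma^{[1], s}_j = \bigcup_{\gamma \in \Gamma^{[1], s}_s} \Gamma^{\gamma, s-1}_j$ used inside the proof of Lemma~\ref{expansion}, and holds tautologically in Case~1), yields the desired identity $W_s = \frac{\sum_{\sigma \in A} Q_1(\sigma)\cdot SD(\sigma)}{N!}$. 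The main obstacle is the careful bookkeeping of the layered partition: verifying the $s$-minimality in Case~1 and identifying $A \setminus A_s$ with $\bigcup_{\sigma \in A_s} G^{\sigma, s-1}$ in Case~2 both rely on the explicit recursive gluing of the $\Gamma^{[1], s}_j$ layers established inside Lemma~\ref{expansion}, and on the fact that the type of each permutation is an intrinsic property independent of which ``upstream'' $\sigma$ it was attached to.
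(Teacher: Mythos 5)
Your proposal is correct and follows essentially the same route as the paper: the paper's proof likewise identifies the optimal winning probability with $\bar{Q}_s([1])$ and, according to the sign of $Q_s([1]) - Q^o_s([1])$, takes $A = G^{[1],s-1} \cup \{[1]\}$ or $A = G^{[1],s}$ from Lemma~\ref{expansion}. Your additional verifications (the $s$-minimality in Case~1, the covering condition via $\sum_{\gamma \in A_s} SD(\gamma) = N!$, and the identification $A \setminus A_s = \bigcup_{\sigma \in A_s} G^{\sigma,s-1}$) are details the paper leaves implicit, and they check out.
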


\begin{proof}
The optimal winning probability is equal to $\bar{Q}_s([1])$. The proof of Lemma~\ref{expansion} for $\sigma = [1]$ establishes the proof of the theorem. If $Q_s([1]) \ge Q^o_s([1])$, then the strike set $A$ corresponds to the set $G^{[1], s-1} \cup \{[1]\}$ in equation~\eqref{qisd-2} of Lemma~\ref{expansion}; if $Q_s([1]) < Q^o_s([1])$, then the strike set $A$ corresponds to the set $G^{[1], s}$ in equation~\eqref{qiqo-2} of Lemma~\ref{expansion}.
\end{proof}

\begin{defn}
Define $g_\tau$ to be an action on the symmetric group that arranges (permutes) a prefix $\sigma = [12 \cdots k]$ to some other prefix $\tau$ of the same length $k$. This action can be extended to $\bar{T}(\sigma)$ and is denoted by $g_\tau \cdot \pi$: It similarly permutes the first $k$ entries and fixes the remaining entries of $\pi \in \bar{T}(\sigma)$. It is easy to see that $g_\tau$ is a bijection from $\bar{T}(\sigma)$ to $\bar{T}(\tau)$.
\end{defn}

\begin{example}
Let $N = 6$, $\tau = [132]$, and $\pi = [245361]$. Clearly, $\pi$ is $[123]$-prefixed. Then $g_{\tau} \cdot \pi = [254361]$.
\end{example}

Consider an arbitrary permutation $\sigma \in S_N$, and let $1 \le i \le s$. 

\textbf{Case 1:} The last position of $\sigma$ is $N$. In this case we have $Q_i(\sigma) = 1$, $Q^o_i(\sigma) = 0$, and $\bar{Q}_i(\sigma) = 1$. 

\textbf{Case 2:} The last position of $\sigma$ is not $N$. In this case we have $Q_i(\sigma) = 0$, $Q^o_i(\sigma) = 0$, and $\bar{Q}_i(\sigma) = 0$.\\

To proceed with our analysis, we invoke the following result.






\begin{lemma}\label{qi}
For all prefixes $\tau$, the probabilities $Q_i$ for each $i \in \{1, \ldots, s\}$ are preserved under the restricted bijection $g_{\tau}: T^o(12 \cdots k) \to T^o(\tau)$, where $k \le N-1$ is the length of $\tau$. If $\tau$ is eligible then $Q_i([12 \cdots k]) = Q_i(\tau)$. Consequently, for $\sigma \in T^o(12 \cdots k)$ (and, additionally, for $\sigma = [12 \cdots k]$ if $\tau$ is eligible),

(a) The probabilities $Q_i^o(\sigma)$ are preserved by $g_{\tau}$;

(b) The probabilities $\bar{Q}_i(\sigma)$ are preserved by $g_{\tau}$;

(c) If $\sigma$ and $\tau$ are eligible, we have that $\sigma$ is type $i$-positive if and only if $g_{\tau} \cdot \sigma$ is type $i$-positive. 

\end{lemma}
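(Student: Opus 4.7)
My strategy is a coupled induction: an outer induction on $i$ from $1$ to $s$ together with an inner backward induction on the length $\ell$ of $\sigma$ from $N$ down to $k$, paralleling the sequential computation of Proposition~\ref{probs-qiqio}. Before starting, I would isolate four basic preservation properties of $g_\tau$ that carry most of the weight. (i) By hypothesis, $g_\tau$ is a bijection $\bar{T}([12\cdots k]) \to \bar{T}(\tau)$, hence restricts to a bijection between the $T^o$-parts. (ii) $g_\tau$ commutes with the child map: $g_\tau \cdot \lambda_j(\sigma) = \lambda_j(g_\tau \cdot \sigma)$ for every $\sigma$ in $\bar{T}([12\cdots k])$ with $|\sigma| < N$, because $\lambda_j$ depends only on the rank of the appended last entry relative to the multiset of values at the first $|\sigma|$ positions, while $g_\tau$ permutes only the first $k$ of those positions without altering that multiset. (iii) $SD(\sigma) = SD(g_\tau \cdot \sigma)$, since $SD$ depends only on $|\sigma|$. (iv) For $\sigma \in T^o([12\cdots k])$ (so $|\sigma| > k$), $\sigma$ is eligible iff $g_\tau \cdot \sigma$ is eligible, since the value at position $|\sigma|$ and the multiset of values at positions $1,\dots,|\sigma|-1$ are both preserved. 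Moreover, $[12\cdots k]$ is always eligible, and $\tau$ is eligible by hypothesis in the additional root case $\sigma = [12\cdots k]$.

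With these in hand, the base case $i=1$ is immediate: a direct count yields $Q_1(\sigma) = |\sigma|/N$ when $\sigma$ is eligible and $0$ otherwise, so (iv) gives $Q_1(\sigma) = Q_1(g_\tau \cdot \sigma)$ throughout $T^o([12\cdots k])$ as well as at the root when $\tau$ is eligible. For the outer inductive step, assuming preservation of $Q_{i-1}$, $Q^o_{i-1}$, and $\bar{Q}_{i-1}$, I would run the inner backward induction on length to deduce preservation of $Q^o_i$ and $\bar{Q}_i$. The inner base $|\sigma| = N$ is trivial, since each of $Q_i, Q^o_i, \bar{Q}_i$ is $0$ or $1$ depending only on whether the last entry equals $N$. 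For the inner step at length $\ell < N$, the recursion
\[
Q^o_i(\sigma) = \bar{Q}_i(\lambda_1(\sigma)) \xor \cdots \xor \bar{Q}_i(\lambda_{\ell+1}(\sigma))
\]
combined with the commutation in (ii), the inner hypothesis that $\bar{Q}_i$ is preserved at length $\ell+1$, and the $SD$-preservation in (iii), yields $Q^o_i(\sigma) = Q^o_i(g_\tau \cdot \sigma)$. Then $Q_i(\sigma) = Q_1(\sigma) + Q^o_{i-1}(\sigma)$ is preserved by combining the $i=1$ case with the outer hypothesis, and $\bar{Q}_i = \max\{Q_i, Q^o_i\}$ inherits preservation.

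Parts (a) and (b) are direct consequences of the above. For (c), $\sigma$ is type $i$-positive by definition exactly when $Q_i(\sigma) \ge Q^o_i(\sigma)$, and each side is preserved under $g_\tau$; the eligibility hypothesis on $\sigma$ and $\tau$ is precisely what is needed to secure the equality $Q_i([12\cdots k]) = Q_i(\tau)$ at the root, since $Q^o_{i-1}$ is preserved unconditionally and only the $Q_1$ contribution requires both prefixes to be eligible.

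The main obstacle I foresee is the child commutation (ii): it is the single content-bearing step linking the two prefix trees, and it must be checked separately for the root transition $[12\cdots k] \mapsto \tau$ (where $\tau$ need not be eligible yet $\lambda_j$ is still defined and indexed consistently on both sides) and for internal nodes $\sigma \in T^o$. Once (ii) is in hand, the rest of the argument is bookkeeping around the recursive identities $Q_i = Q_1 + Q^o_{i-1}$ and $Q^o_i = \xor_j \bar{Q}_i \circ \lambda_j$.
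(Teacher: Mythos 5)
Your proposal is correct and follows essentially the same route as the paper: an outer induction on $i$ combined with a backward induction on prefix length, driven by the identities $Q_i=Q_1+Q^o_{i-1}$ and $\bar Q_i=\max\{Q_i,Q^o_i\}$, with the root case $\sigma=[12\cdots k]$ handled via the eligibility of $\tau$. The only difference is cosmetic: for the inner step you propagate $Q^o_i$ through the one-level child recursion of Proposition~\ref{basicexpansion} together with the commutation $g_\tau\cdot\lambda_j(\sigma)=\lambda_j(g_\tau\cdot\sigma)$, whereas the paper argues that the strike-set algorithm of Proposition~\ref{easyexpansion} produces corresponding sets $\Gamma_i$ and $g_\tau\cdot\Gamma_i$ -- both are valid and yours is, if anything, slightly more self-contained.
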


\begin{proof}
The case when $i = 1$ was a corollary of Theorem 3.5 of Jones~\cite{jones2020weighted}. The proof proceeds by induction on the subscript $i$ of the probabilities $Q_i, Q^o_i, \bar{Q}_i$. Assume that the result holds for the probabilities $Q_m, Q^o_m, \bar{Q}_m$ with $m \le i-1$. We prove the claimed result for $Q_i, Q^o_i, \bar{Q}_i$, where $2 \le i \le s$. Let $\sigma \in T^o(12 \cdots k)$, we have
$$Q_i(g_{\tau} \cdot \sigma) = Q_1(g_{\tau} \cdot \sigma) + Q_{i-1}^o(g_{\tau} \cdot \sigma) = Q_1(\sigma) + Q_{i-1}^o(\sigma) = Q_i(\sigma).$$ 
If $\tau$ is eligible then we apply the argument to the restricted bijection $T^o([12 \cdots (k-1)]) \to T^o(\tau|_{k-1})$. 

\begin{claim}\label{Q^o_i}
For $\sigma$ of length $\ell$, where $k < \ell \le N$, $Q^o_i(g_{\tau} \cdot \sigma) = Q_i^o(\sigma)$. 
\end{claim}
\begin{proof}
We use induction on the length of $\sigma$. When $\sigma$ has length $N$ it holds that $Q^o_i(g_{\tau}\cdot \sigma) = Q^o_i(\sigma) = 0 $.  Assume now that statement (a) holds for all permutations of length at least $\ell+1$. We next present an argument for the case when $\sigma$ is of length $\ell$, where $k < \ell \le N-1$.

By the already proved statement for the probability $Q_i$ and the induction hypothesis, the probabilities $Q_i$ and $Q^o_i$ for a permutation $\mu$ of length larger than $\ell$ only depend on the length of $\mu$ and the value of the last position of $\mu$. By the Main Step of the algorithm described in the proof of Lemma~\ref{expansion}, if we process $\sigma$ and end up obtaining a set $\Gamma_i = \{\gamma_1, \ldots, \gamma_{r}\}$, then when we process $g_{\tau} \cdot \sigma$ we end up obtaining the set $\Gamma_i' = \{g_{\tau} \cdot \gamma_1, \ldots, g_{\tau} \cdot \gamma_{r}\}$. Therefore, by Lemma~\ref{expansion}, 
$$
Q_i^o(g_{\tau} \cdot \sigma) = Q_i(g_{\tau} \cdot \gamma_1) \xor Q_i(g_{\tau} \cdot \gamma_2) \xor \cdots \xor Q_i(g_{\tau} \cdot \gamma_r)
$$
$$
= Q_i(\gamma_1) \xor Q_i(\gamma_2) \xor \cdots \xor \cdot Q_i(\gamma_r) = Q^o_i(\sigma). \qedhere
$$
\end{proof}

By Claim~\ref{Q^o_i}, statement (a) is true. Claims (b) and (c) can be established from the previous results and the fact $\bar{Q}_i(\sigma) = \max\{Q_i(\sigma), Q^o_i(\sigma)\}$. This completes the main part of the proof.

Each of statements (a), (b), and (c) hold for $\sigma = [12 \cdots k]$ and a permutation $\tau$ which is eligible and of length $k$, since we can apply the corresponding argument to the restricted bijection $T^o([12 \cdots (k-1)]) \to T^o(\tau|_{k-1})$.
\end{proof}

For $1 \le i \le s$, we know from Lemma~\ref{qi} that the probabilities $Q_i(\sigma), Q^o_i(\sigma), \bar{Q}_i(\sigma)$ only depend on the length and the value of the last position of $\sigma$. Moreover, we prove next that $Q^o_i(\sigma)$ only depends on the length of $\sigma$ and does not depend on the value of the last position of $\sigma$. 

\begin{lemma}\label{only length}
For all $1 \le i \le s$, the probability $Q^o_i(\sigma)$ only depends on the length of $\sigma$.
\end{lemma}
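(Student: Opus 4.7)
The plan is to combine the expansion from Proposition~\ref{basicexpansion} (or its $i=1$ companion in Equation~\eqref{basicq0o}) with the invariance already established in Lemma~\ref{qi}. Writing $k = |\sigma|$, we have
\[
Q^o_i(\sigma) = \bar{Q}_i(\lambda_1(\sigma)) \xor \bar{Q}_i(\lambda_2(\sigma)) \xor \cdots \xor \bar{Q}_i(\lambda_{k+1}(\sigma)),
\]
so it suffices to argue that each term $\bar{Q}_i(\lambda_j(\sigma))$ and each accompanying denominator $SD(\lambda_j(\sigma))$ is independent of $\sigma$ beyond its length.

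By Definition~\ref{children}, $\lambda_j(\sigma)$ has length $k+1$ and its relabelled last entry equals $j$, regardless of which $\sigma$ of length $k$ we started with. The paragraph immediately preceding the lemma records (as a consequence of Lemma~\ref{qi}) that $\bar{Q}_i(\tau)$ depends only on $|\tau|$ and the value of $\tau$'s last position. Applied to $\tau = \lambda_j(\sigma)$ this shows $\bar{Q}_i(\lambda_j(\sigma))$ is a function of $(k+1, j)$ alone; in particular, it is independent of the last position of $\sigma$.

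To conclude, I must verify the same for the canonical denominators being combined by $\xor$. Since $SD(\lambda_j(\sigma)) = (N-k-1)!$ depends only on $k$, both the numerator $\sum_{j=1}^{k+1} \bar{Q}_i(\lambda_j(\sigma)) \cdot SD(\lambda_j(\sigma))$ and the denominator $\sum_{j=1}^{k+1} SD(\lambda_j(\sigma)) = (k+1)(N-k-1)!$ of the fraction representing $Q^o_i(\sigma)$ depend only on $k$, which is the desired conclusion. The boundary case $k = N$ is vacuous, since then $Q^o_i(\sigma) = 0$ by definition.

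The only subtlety is notational: the $\xor$ operator sums unsimplified numerators and unsimplified denominators, so one must keep each $\bar{Q}_i(\lambda_j(\sigma))$ expressed with its canonical denominator $SD(\lambda_j(\sigma))$, as emphasized in Remark~\ref{key}. Once this bookkeeping is respected, no new ideas are required and no further induction beyond what is already inside Lemma~\ref{qi} is needed; I therefore do not anticipate any substantive obstacle.
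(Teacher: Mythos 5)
Your argument is correct and follows essentially the same route as the paper: both expand $Q^o_i(\sigma)$ over the children $\lambda_1(\sigma),\ldots,\lambda_{k+1}(\sigma)$ and then invoke the invariance from Lemma~\ref{qi} to see that each $\bar{Q}_i(\lambda_j(\sigma))$ is independent of the particular $\sigma$ of length $k$ (the paper does this via the explicit bijections $g_{\sigma'},g_{\sigma''}$ applied to $T^o([12\cdots k])$, you via the derived fact that $\bar{Q}_i(\tau)$ depends only on $|\tau|$ and the last value). The only slip is the formula $SD(\lambda_j(\sigma)) = (N-k-1)!$, which should be $N!/(k+1)!$; this is harmless, since the argument only uses that $SD(\lambda_j(\sigma))$ depends on $k$ alone.
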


\begin{proof}
We know $Q_i^o(\sigma) = 0$ for every $\sigma$ of length $N$. Let $\sigma'$ and $\sigma''$ be two permutations of length $k-1$, where $2 \le k \le N$. For each permutation $\sigma$ of length $k-1$, recall the definition of $\lambda_j(\sigma)$, $1 \le j \le k$, which was defined in Definition~\ref{children}. Let $\phi = [12 \cdots (k-1)]$. By Lemma~\ref{qi}, and using the bijections $g_{\sigma'}: T^o(\phi) \to T^o(\sigma')$ and $g_{\sigma''}: T^o(\phi) \to T^o(\sigma'')$, we have 
$$Q^o_i(\sigma') = \bar{Q}_i(\lambda_{1}(\sigma')) \xor \cdots \xor \bar{Q}_i(\lambda_{k}(\sigma')) = \bar{Q}_i(\lambda_{1}(\phi)) \xor \cdots \xor \bar{Q}_i(\lambda_{k}(\phi))$$
$$ = \bar{Q}_i(\lambda_{1}(\sigma'')) \xor \cdots \xor \bar{Q}_i(\lambda_{k}(\sigma'')) = Q^o_i(\sigma''). \qedhere$$

\end{proof}

In order to simplify our exposition, we henceforth change the notation and let $Q_i(\sigma)$, $Q_i^o(\sigma)$, $\bar{Q}_i(\sigma)$ stand for the \emph{numerators} in the definition of the underlying probabilities, each with respect to the standard denominator $SD(\sigma)$.

Next, let $\sigma = [12 \cdots (k-1)]$, recall that $\lambda_{j}(\sigma) = [12 \cdots (j-1)(j+1) k j]$, and 
$\lambda_{k}(\sigma) = [12 \cdots k]$, with $2 \le k \le N$. Let $\bar{Q}_0$ be defined to be $0$ for all permutations. We have the following relations.



\begin{lemma}\label{relation-2}
For $2 \le i \le s$, one has 
$$Q_i^o(\sigma) = (k-1) \cdot Q_i^o(\lambda_{k}(\sigma)) + \bar{Q}_i(\lambda_{k}(\sigma))  \quad \text{and} \quad Q_i(\sigma) = (k-1) \cdot Q_i(\lambda_{k}(\sigma)) + \bar{Q}_{i-1}(\lambda_{k}(\sigma)).$$
\end{lemma}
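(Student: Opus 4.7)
My plan is to prove both identities by starting from the basic expansion of Proposition~\ref{basicexpansion} and analysing the $k$ children $\lambda_1(\sigma),\ldots,\lambda_k(\sigma)$ of $\sigma=[12\cdots(k-1)]$ individually, exploiting the fact that only $\lambda_k(\sigma)=[12\cdots k]$ is eligible while each other child has last value $j<k$ (the value $k$ already appearing at position $k-1$).

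For the first identity, Proposition~\ref{basicexpansion} gives $Q_i^o(\sigma)=\bar Q_i(\lambda_1(\sigma))\oplus\cdots\oplus\bar Q_i(\lambda_k(\sigma))$. Because all $k$ children have the same length they share the common denominator $SD(\lambda_k(\sigma))$, and moreover $SD(\sigma)=k\cdot SD(\lambda_k(\sigma))$; under the numerator convention introduced just before the lemma, the $\oplus$-expansion therefore collapses to the ordinary sum $Q_i^o(\sigma)=\sum_{j=1}^{k}\bar Q_i(\lambda_j(\sigma))$. I would then handle the non-eligible children $\lambda_j(\sigma)$, $j<k$, in one stroke: for any non-eligible prefix $\mu$ the $|\mu|$-th applicant cannot carry the value $N$, so $Q_1(\mu)=0$, and the identity $Q_i(\mu)=Q_1(\mu)+Q_{i-1}^o(\mu)$ collapses to $Q_i(\mu)=Q_{i-1}^o(\mu)$. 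Combined with the elementary monotonicity $Q_{i-1}^o(\mu)\le Q_i^o(\mu)$ (simulate the optimal $(i-1)$-selection policy by discarding one of the $i$ available selections), this forces $\bar Q_i(\lambda_j(\sigma))=Q_i^o(\lambda_j(\sigma))$ whenever $j<k$. Lemma~\ref{only length} then replaces each such term by $Q_i^o(\lambda_k(\sigma))$, yielding $Q_i^o(\sigma)=(k-1)\,Q_i^o(\lambda_k(\sigma))+\bar Q_i(\lambda_k(\sigma))$.

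For the second identity, I would start from $Q_i(\sigma)=Q_1(\sigma)+Q_{i-1}^o(\sigma)$ and apply the first identity to $Q_{i-1}^o(\sigma)$, noting that its derivation above used only Proposition~\ref{basicexpansion} and Lemma~\ref{only length}, so it is equally valid for $i-1=1$ (no use is made of $\bar Q_0$ at this point). The remaining step is the clean relation $Q_1(\sigma)=(k-1)\,Q_1(\lambda_k(\sigma))$ in numerator form, which I would verify by a direct count: the number of $\pi\in S_N$ with $\pi|_{k-1}=[12\cdots(k-1)]$ and $\pi(k-1)=N$ equals $\binom{N-1}{k-2}(N-k+1)!$, whereas the number with $\pi|_k=[12\cdots k]$ and $\pi(k)=N$ equals $\binom{N-1}{k-1}(N-k)!$, and the ratio of the two is exactly $k-1$. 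Regrouping then gives $Q_i(\sigma)=(k-1)\bigl[Q_1(\lambda_k(\sigma))+Q_{i-1}^o(\lambda_k(\sigma))\bigr]+\bar Q_{i-1}(\lambda_k(\sigma))=(k-1)\,Q_i(\lambda_k(\sigma))+\bar Q_{i-1}(\lambda_k(\sigma))$, as required.

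The main subtlety to be careful about is the mixed-denominator convention: in each stated equation the numerators on the two sides are taken with respect to different standard denominators, $SD(\sigma)$ on the left and $SD(\lambda_k(\sigma))$ on the right. The identity $SD(\sigma)=k\cdot SD(\lambda_k(\sigma))$ is precisely what turns the $\oplus$-average of the $k$ children into the weighted sum $(k-1)\,Q_i^o(\lambda_k(\sigma))+\bar Q_i(\lambda_k(\sigma))$ rather than an actual average; once this bookkeeping is straight, what remains is the short eligibility case analysis and the direct count of $Q_1(\sigma)$ versus $Q_1(\lambda_k(\sigma))$ sketched above.
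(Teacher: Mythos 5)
Your argument is correct and follows essentially the same route as the paper: expand $Q_i^o(\sigma)$ over the $k$ children via Proposition~\ref{basicexpansion}, use that only $\lambda_k(\sigma)$ is eligible (so $\bar{Q}_i=Q_i^o$ on the other children) together with Lemma~\ref{only length} to collapse the sum, and then obtain the second identity from $Q_i=Q_1+Q_{i-1}^o$ plus the count $Q_1(\sigma)=(k-1)\,Q_1(\lambda_k(\sigma))$. You in fact make explicit two points the paper leaves implicit, namely the denominator bookkeeping $SD(\sigma)=k\cdot SD(\lambda_k(\sigma))$ and the reason non-eligible children satisfy $\bar{Q}_i=Q_i^o$.
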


\begin{proof}
We first consider $Q_i^o(\sigma)$. We know that $\sigma$ has $k$ children in the prefix tree, and these children are $\lambda_{1}(\sigma), \ldots, \lambda_{k}(\sigma)$. The permutation $\lambda_{k}(\sigma)$ itself is an eligible child thus $\bar{Q}_i(\lambda_{k}(\sigma))$ is the optimal probability for the subtree rooted at $\lambda_{k}(\sigma)$. 




Therefore, since $\lambda_1(\sigma), \ldots, \lambda_{k-1}(\sigma)$ are not eligible and by Lemma~\ref{only length},
$$Q_i^o(\sigma) = \bar{Q}_i(\lambda_{1}(\sigma)) + \ldots + \bar{Q}_i(\lambda_{k-1}(\sigma)) + \bar{Q}_i(\lambda_{k}(\sigma)) $$
\begin{equation}\label{qio-expansion}
= Q^o_i(\lambda_{1}(\sigma)) + \ldots + Q^o_i(\lambda_{k-1}(\sigma)) + \bar{Q}_i(\lambda_{k}(\sigma)) = (k-1) \cdot Q_i^o(\lambda_{k}(\sigma)) + \bar{Q}_i(\lambda_{k}(\sigma)).    
\end{equation}

By Lemma~\ref{relation-2} and~\eqref{qio-expansion}, 
$$Q_i(\sigma) = Q_1(\sigma) + Q_{i-1}^o(\sigma) = (k-1) \cdot Q_1(\lambda_k(\sigma)) + (k-1) \cdot Q_{i-1}^o(\lambda_k(\sigma)) + \bar{Q}_{i-1}(\lambda_k(\sigma))$$
$$= (k-1) \cdot Q_i(\lambda_k(\sigma)) + \bar{Q}_{i-1}(\lambda_k(\sigma)). \qedhere$$

\end{proof}


By Lemma~\ref{relation-2}, if an eligible permutation is negative then all eligible permutations of shorter length are negative as well.

\begin{corollary}\label{cor-2}
For increasing prefixes $\sigma = [12 \cdots (k-1)]$ and $\lambda_{k}(\sigma) = [12 \cdots k]$, we have that if $\lambda_{k}(\sigma)$ is type $i$-negative then $\sigma$ is type $i$-negative, where $1 \le i \le s$.
\end{corollary}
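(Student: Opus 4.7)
The plan is to apply Lemma~\ref{relation-2} to both sides and compare $Q_i(\sigma)$ with $Q_i^o(\sigma)$ termwise. Writing $\rho := \lambda_k(\sigma) = [12\cdots k]$ for brevity, subtracting the two identities of Lemma~\ref{relation-2} yields
\[
Q_i^o(\sigma) - Q_i(\sigma) \;=\; (k-1)\bigl[Q_i^o(\rho) - Q_i(\rho)\bigr] \;+\; \bigl[\bar{Q}_i(\rho) - \bar{Q}_{i-1}(\rho)\bigr].
\]
The hypothesis that $\rho$ is type $i$-negative says exactly that $Q_i^o(\rho) - Q_i(\rho) > 0$, so because $k \ge 2$ the first bracket contributes a strictly positive term. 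Thus it suffices to show that the second bracket is nonnegative.

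For the second bracket I would invoke the monotonicity $\bar{Q}_{i-1}(\rho) \le \bar{Q}_i(\rho)$, which captures the intuition that extra selections can never hurt. This is not literally stated earlier in the paper but follows from a short induction paralleling Proposition~\ref{probs-qiqio}: at length $N$ the two quantities coincide by the explicit case analysis just above Lemma~\ref{qi}; inductively, $Q_j(\tau) = Q_1(\tau) + Q_{j-1}^o(\tau)$ is nondecreasing in $j$ whenever $Q_{j-1}^o$ is, and the $\xor$-expansion of Proposition~\ref{basicexpansion} preserves the monotonicity termwise (so $\bar Q_j = \max\{Q_j, Q_j^o\}$ does as well). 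Once this is in hand, both summands on the right of the displayed identity above are nonnegative and the first is strictly positive, so $Q_i^o(\sigma) > Q_i(\sigma)$, i.e., $\sigma$ is type $i$-negative.

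The case $i = 1$ is handled by the same computation using the convention $\bar{Q}_0 \equiv 0$ declared immediately before Lemma~\ref{relation-2}; in fact in this case the monotonicity step is trivial since $\bar Q_1(\rho) - \bar Q_0(\rho) = \bar Q_1(\rho) \ge 0$ directly. The main (and indeed only) obstacle is the monotonicity $\bar Q_{i-1} \le \bar Q_i$; it is easy to believe but, since it has not been formally recorded earlier, requires the brief inductive verification sketched above. Everything else is routine algebra on Lemma~\ref{relation-2}.
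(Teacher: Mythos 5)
Your proof is correct and follows essentially the same route as the paper's: both apply the two identities of Lemma~\ref{relation-2} and combine the hypothesis $Q_i^o(\lambda_k(\sigma)) > Q_i(\lambda_k(\sigma))$ with the monotonicity $\bar{Q}_{i-1}(\lambda_k(\sigma)) \le \bar{Q}_i(\lambda_k(\sigma))$ to conclude. The only difference is that the paper simply asserts this monotonicity inside the proof, whereas you correctly flag it as unproved and supply the short double induction that justifies it.
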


\begin{proof}
Suppose $\sigma$ is type $i$-negative, i.e., $Q^o_i(\lambda_{k}(\sigma))>Q_i(\lambda_{k}(\sigma))$. By Lemma~\ref{relation-2} and $\bar{Q}_{i}(\lambda_{k}(\sigma)) \ge \bar{Q}_{i-1}(\lambda_{k}(\sigma))$,
$$Q_i^o(\sigma) = (k-1) \cdot Q_i^o(\lambda_{k}(\sigma)) + \bar{Q}_i(\lambda_{k}(\sigma)) >  (k-1) \cdot Q_i(\lambda_{k}(\sigma))  + \bar{Q}_{i-1}(\lambda_{k}(\sigma)) = Q_i(\sigma). \qedhere$$
\end{proof}


In words, Corollary~\ref{cor-2} asserts that each $Q_i(k) - Q_i^o(k)$ is a non-decreasing function of $k$.

\begin{example}\label{prefixtree}
To clarify the above observations and concepts, we present an example for the case $s = 2$ and $N = 4$. An optimal strategy is the $(0,1)$-strategy where we accept the first applicant, ask the Genie whether this applicant is the best, and then accept the next left-to-right maximum. The optimal winning probability is $17/24$, an improvement of $6/24$ when compared with the optimal winning probability (equal to $11/24$) for the case when only one selection is allowed (see Figure~\ref{tree-1}). Note that for each permutation $\sigma \in S_4$, we list the probabilities $Q_1, Q_1^o$ in the first line and the probabilities $Q_2, Q_2^o$ in the second line underneath each permutation shown in Figure~\ref{tree-1}.
\end{example}

\begin{figure}
\begin{center}
  \includegraphics[scale=0.65]{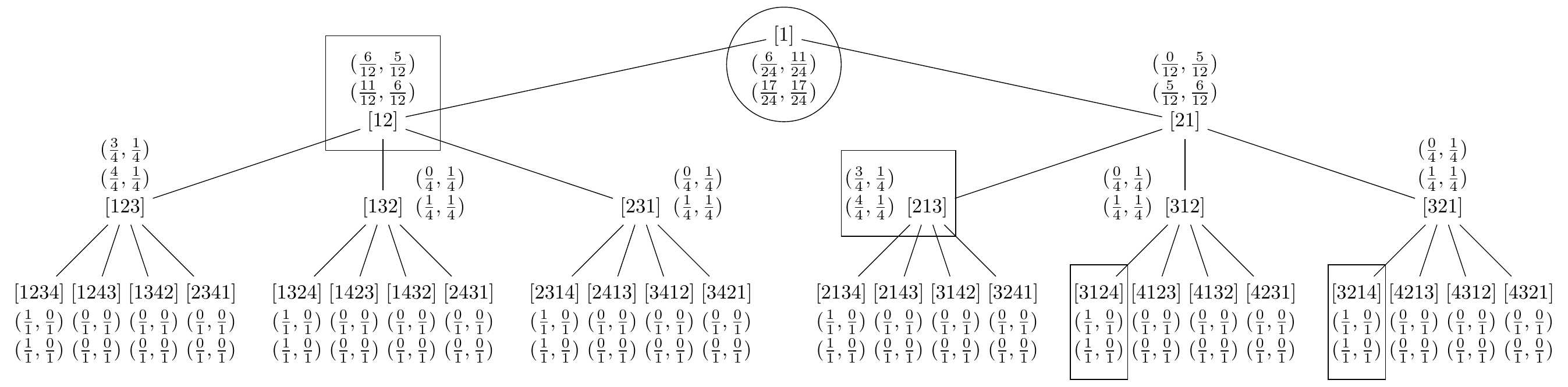}
  \caption{The prefix tree, $Q_1, Q^o_1, Q_2, Q^o_2$ probabilities , and a $2$-strike set for our problem with 4 applicants.}\label{tree-1}
\vspace{-8mm}
\end{center}
\end{figure}


\section{The optimal strategy}\label{optimal}

The maximum probability of winning for the Dowry model with $s$ selections and the query-based model with $s-1$ queries are the same, as both models have a budget of $s$ selections and the goal is to choose the best applicant. However, the expected stopping times are very different. Under the query-based model, the process immediately terminates after obtaining a positive answer from the expert. On the other hand, the decision making entity continues to interview the remaining applicants after a selection is made (if there is a selection left) under the Dowry setting, as it has no information whether the current applicant is the best. 

To obtain the optimal strategy for the Dowry model, we observe that $Q^o_i(\sigma) \ge Q_{i-1}^o(\sigma)$, $Q_i(\sigma) \ge Q_{i-1}(\sigma)$, and $Q_i(\sigma) \ge Q_{i-1}^o(\sigma)$ hold true for every $\sigma \in \bigcup\limits_{k=1}^{N}$ and $1 \le i \le s-1$. Let $Q_i(k)$ denote the probability $Q_i$ of eligible prefixes of length $k$, where $1 \le i \le s$. The probabilities $Q^o_i(k)$ and $\bar{Q}_i(k)$, where $1 \le i \le s$, are defined similarly.

\begin{theorem}\label{strategy}
An optimal strategy for our problem with $s$ selections is a positional $s$-thresholds strategy, i.e., there are $s$ numbers $0 \le k_1 \le k_2 \le \ldots \le k_s \le N$ such that when considering the $i^{\text{th}}$ selection, where $1 \le i \le s$, we reject the first $k_{i}$ applicants, wait for the $(i-1)^{\text{th}}$ selection,  and then accept the next left-to-right maximum. 
\end{theorem}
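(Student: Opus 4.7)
The plan is to leverage the structural reductions already in place. By Lemma~\ref{qi} together with Lemma~\ref{only length}, for any eligible prefix $\sigma$ of length $k$ and any $j\in\{1,\ldots,s\}$, the quantities $Q_j(\sigma)$, $Q_j^o(\sigma)$, and $\bar{Q}_j(\sigma)$ depend only on $k$ and $j$; so I may unambiguously write $Q_j(k)$ and $Q_j^o(k)$, as introduced just before the statement of the theorem. Consequently, the optimal decision at a left-to-right maximum sitting at position $k$ with $j$ remaining selections (counting the current decision) reduces to a comparison of two numbers depending only on the position length, making the optimal rule positional.

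I would then define, for each $j \in \{1, \ldots, s\}$,
\[
t_j \;:=\; \min\bigl\{\,k \in \{1,\ldots,N\}\ :\ Q_j(k)\ge Q_j^o(k)\,\bigr\},
\]
and invoke Corollary~\ref{cor-2}, which says type $j$-positivity is upward-closed in $k$, to conclude that an eligible prefix of length $k$ is type $j$-positive if and only if $k \ge t_j$. Setting $k_i := t_{s-i+1}$, so that at the $i$-th selection $j = s-i+1$ selections are available, the strategy that at its $i$-th selection rejects the first $k_i$ applicants and then accepts the next left-to-right maximum coincides, at every eligible position, with the action in $\{Q_j(k),Q_j^o(k)\}$ of larger probability, and is therefore optimal.

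What remains is the monotonicity claim $k_1 \le k_2 \le \cdots \le k_s$, equivalently $t_{j+1} \le t_j$ for $1 \le j \le s-1$, and I expect this to be the main obstacle. A short rearrangement using $Q_j(\sigma) = Q_1(\sigma) + Q_{j-1}^o(\sigma)$ from~\eqref{qqoqbar} gives
\[
\bigl(Q_{j+1}(k) - Q_{j+1}^o(k)\bigr) - \bigl(Q_j(k) - Q_j^o(k)\bigr) \;=\; 2\,Q_j^o(k) - Q_{j+1}^o(k) - Q_{j-1}^o(k),
\]
so the threshold monotonicity is equivalent to concavity of $j \mapsto Q_j^o(k)$ for every $k$, that is, to the natural diminishing-returns principle that each additional selection yields a non-increasing marginal gain. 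I would establish this concavity by backward induction on $k$, starting from the trivial base case $Q_j^o(N) = 0$ and propagating upwards via the recursion of Lemma~\ref{relation-2} together with the expansion $Q_j^o(\sigma) = \oplus_r \bar{Q}_j(\lambda_r(\sigma))$ from Proposition~\ref{basicexpansion}. The delicate point is controlling the $\max$ in $\bar{Q}_j = \max\{Q_j, Q_j^o\}$; here one uses the inductive fact that the sign of $Q_j(k) - Q_j^o(k)$ is itself monotone in $j$ (flipping at most once from negative to positive), so the envelope $\bar{Q}_j(k)$ inherits the concavity needed to push the induction through. Once concavity of $Q_j^o(k)$ in $j$ is secured, $t_{j+1} \le t_j$ is immediate, and the theorem follows.
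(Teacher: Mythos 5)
Your first two paragraphs track the paper's own argument exactly: Lemma~\ref{qi} and Lemma~\ref{only length} make the accept/reject comparison depend only on the position $k$ and the number $j$ of remaining selections, and Corollary~\ref{cor-2} gives the upward-closedness in $k$ that produces a single threshold $t_j$ for each $j$. That part is correct and is essentially the paper's proof.

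The gap is in how you handle the ordering $k_1 \le k_2 \le \cdots \le k_s$. You reduce it to concavity of $j \mapsto Q_j^o(k)$ and propose a backward induction on $k$ whose ``delicate point'' is controlled by ``the inductive fact that the sign of $Q_j(k) - Q_j^o(k)$ is itself monotone in $j$'' --- but that sign-monotonicity is precisely the statement $t_{j+1}\le t_j$ you are trying to establish, so as written the argument is circular (a joint induction might rescue it, but you have not set one up, and propagating concavity through the pointwise maximum $\bar{Q}_j=\max\{Q_j,Q_j^o\}$ in the expansion $Q_j^o(\sigma)=\oplus_r \bar{Q}_j(\lambda_r(\sigma))$ is exactly where such arguments usually break). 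Also, the ordering is only \emph{implied by}, not equivalent to, the concavity. More to the point, none of this machinery is needed: the $i^{\text{th}}$ selection can only be contemplated after the $(i-1)^{\text{th}}$ selection has actually been made, and that selection occurs at a position strictly greater than $k_{i-1}$; hence every position at which the $i^{\text{th}}$ selection could be made already exceeds $k_{i-1}$, and replacing $k_i$ by $\max\{k_i,k_{i-1}\}$ changes nothing about the strategy's behaviour. This is how the paper obtains $k_{i-1}\le k_i$ in one sentence --- the theorem only asserts the \emph{existence} of a non-decreasing threshold sequence realizing an optimal strategy, so no inequality between the $Q$-quantities for different $j$ has to be proved at all.
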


\begin{proof}
For the $i$-th selection ($j = s+1-i$ selections left), $1 \le i \le s$, by the algorithm described in Lemma~\ref{expansion}, we check if $\sigma$ is eligible and type $j$-positive (i.e., if $Q_j(\sigma) \ge Q^o_j(\sigma)$); if yes, we accept the current applicant and continue to the next selection (if one is left); if no, we reject the current applicant and continue our search; if there are no selections left, we terminate the process. By Corollary~\ref{cor-2}, we know each $Q_j(k) - Q_j^o(k)$ is a non-decreasing function of $k$, which allows us to formulate the optimal strategy.

By Corollary~\ref{cor-2}, and since we know $Q_j(N) \ge Q_j^o(N)$, there exists some $0 \le k_i \le N$ such that $Q_{j}(k) \ge Q^o_{j}(k)$ for $k \ge k_i+1$ and $Q_{j}(k) < Q^o_{j}(k)$ for all $k \le k_i$, where $1 \le i \le s$. Therefore, an optimal strategy is to reject the first $k_i$ applicants and then accept the next left-to-right maximum thereafter. It is also clear that every optimal strategy needs to proceed until the $(i-1)^{\text{th}}$ selection is made before considering the $i^{\text{th}}$ selection. Thus, $k_{i-1} \le k_i$ for each $i \in \{2, \ldots, s\}$.
\end{proof}

By the definition of the $Q_j(k), Q^o_j(k), \bar{Q}_j(k)$ probabilities, we know that they only depend on $k, N$ and the number of selections left before interviewing the current applicant, i.e., the subscript $j$. Thus, for two different models where the number of selections are $s_1$ and $s_2$ respectively (say $s_1 < s_2$), with the same value of $N$, each of the thresholds $k'_{s_1+1 - j}$ for the model with $s_1$ selections and $k''_{s_2+1 - j}$ for the model with $s_2$ selections are the same, where $1 \le j \le s_1$. In other words, our optimal strategy is right-hand based; and Corollary~\ref{fixed} holds.

\begin{corollary}\label{fixed}
Let $N$ be a fixed positive integer. There is a sequence of numbers $a_1, a_2, \ldots,$ such that when the number of selections $s \ge 1$ is fixed, then an optimal strategy is the $(a_s, a_{s-1}, \ldots, a_1)$-strategy. In other words, the $(s+1-i)^{\text{th}}$ threshold $k_{s+1-i}$ (the $i^{\text{th}}$ from the right) does not depend on the total number of selections allowed, i.e. the value of $s$, and always equals $a_i$, for $1\le i \le s$.
\end{corollary}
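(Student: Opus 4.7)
The plan is to show that the thresholds $k_i$ produced by Theorem~\ref{strategy} depend only on $N$ and on the number of selections still available when the $i^{\text{th}}$ selection is about to be made, namely $j=s+1-i$, and not on the total budget $s$ itself. Once this decoupling is established, setting $a_j$ to be the threshold associated with having $j$ selections remaining yields a sequence independent of $s$, and the corollary follows by re-indexing.

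First I would verify that the numbers $Q_j(k), Q^o_j(k), \bar{Q}_j(k)$ on eligible prefixes of length $k$ depend only on $N, k, j$ and not on $s$. This is visible directly from the backward induction of Proposition~\ref{probs-qiqio}: the base cases at length $N$ are determined solely by whether the last position equals $N$ (together with the convention $\bar{Q}_0\equiv 0$), while the inductive step uses~\eqref{basicqio} and~\eqref{qqoqbar}, which express $Q^o_j, Q_j$ at a given prefix in terms of $Q_j, Q^o_j$ at longer prefixes and $Q^o_{j-1}$ at the same prefix. The recursion never refers to $s$. Hence two models with budgets $s_1<s_2$ but the same $N$ yield identical values of $Q_j(k), Q^o_j(k), \bar{Q}_j(k)$ for every common index $j\in\{1,\dots,s_1\}$ and every length $k$.

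Next, Corollary~\ref{cor-2} says that $k\mapsto Q_j(k)-Q^o_j(k)$ is non-decreasing, and this difference is non-negative at $k=N$; therefore, for each $j$ there is a unique smallest integer $a_j\in\{0,1,\dots,N\}$ satisfying $Q_j(k)\ge Q^o_j(k)$ for all $k\ge a_j+1$. Since $Q_j(k), Q^o_j(k)$ are functions of $(N,k,j)$ alone, $a_j$ is a function of $(N,j)$ alone. This defines the promised sequence $a_1,a_2,\ldots$

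Finally, I would assemble the pieces. For a model with $s$ selections, Theorem~\ref{strategy} tells us that the threshold $k_i$ used for the $i^{\text{th}}$ selection is precisely the crossing point of $Q_j$ and $Q^o_j$ at $j=s+1-i$, so $k_i=a_{s+1-i}$. Re-indexing by $\ell=s+1-i$, the $\ell^{\text{th}}$ threshold counted from the right equals $a_\ell$, which is exactly the $(a_s,a_{s-1},\dots,a_1)$-structure claimed. There is no genuine obstacle in this argument; the only point requiring care is to state cleanly that the double backward induction decouples from $s$, ensuring that the same numerical thresholds arise whenever one compares the same $j$ across different total budgets.
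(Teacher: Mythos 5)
Your proposal is correct and follows essentially the same route as the paper: the paper's justification (given in the paragraph preceding the corollary) is precisely that $Q_j(k)$, $Q^o_j(k)$, $\bar{Q}_j(k)$ depend only on $N$, $k$, and the number $j$ of selections remaining, so the thresholds indexed from the right coincide across different budgets. You simply make explicit the point the paper asserts by definition, namely that the backward induction of Proposition~\ref{probs-qiqio} never references $s$, and then combine this with Corollary~\ref{cor-2} and Theorem~\ref{strategy} exactly as the paper intends.
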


\vspace{-3mm}
\section{Solving the problem}\label{solve}
From previous section, we know that an optimal strategy is a positional $s$-thresholds strategy.

\begin{defn}
 Let $0 \le k_1 \le k_2 \le \ldots \le k_s \le N$. The $(k_1, k_2, \ldots, k_s)$-strategy is defined to be the strategy that when we are making the $i^{\text{th}}$ selection, where $1 \le i \le s$, we (1) wailt until the $(i-1)^{\text{th}}$ selection to be made (if $i=1$ then there is no need to wait) and (2) reject the first $k_i$ applicants and then accept the next left-to-right maximum.
\end{defn}

\begin{defn}
Let $0 \le r \le s$. We call a permutation $\pi$, which has length at most $N$, a  {\em $(k_1, k_2, \ldots, k_s)$-$r$-choosable permutation} if we can make at most $r$ selections when applying the $(k_1, k_2, \ldots, k_s)$-strategy on $\pi$. If the $(k_1, k_2, \ldots, k_s)$ strategy is fixed in the context, then we use $r$-choosable permutation in short.
\end{defn}

\begin{defn}
Let $0 \le k_1 \le k_2 \le \ldots \le k_s \le N$. We denote the number of $(k_1, k_2, \ldots, k_s)$-$r$-choosable permutations in $S_N$ by $T_r(N, k_1, k_2, \ldots, k_s)$. Furthermore, we denote the number of permutations in $S_N$ such that the value $N$ can be selected using the $(k_1, k_2, \ldots, k_s)$-strategy by $W(N, k_1, k_2, \ldots, k_s)$.
\end{defn}

Note that the winning probability by using the $(k_1, k_2, \ldots, k_s)$-strategy is $W(N, k_1,k_2, \ldots, k_s)/N!.$

\vspace{-2mm}
\subsection{The recurrence relations}
Note that if $s \ge r+1$ then $k_r$ is the only threshold that matters when we are calculating the number of permutations which result in at most $r-1$ selections. Therefore, we focus on $T_{r-1}(m, k_1, \ldots, k_r)$.

\begin{lemma}\label{rec-1}
Let $0 \le k_1 \le k_2 \le \ldots \le k_s \le N$ be fixed. Let $1 \le r \le s$ and $m \ge k_r$. We claim that the following recurrence relation holds:
\begin{equation}
T_{r-1}(m, k_1, \ldots, k_r) = k_r \cdot (m-1)! + (m-1)! \cdot \sum\limits_{i=k_r+1}^{m} \frac{T_{r-2}(i-1, k_1, \ldots, k_{r-1})}{(i-1)!}.
\end{equation}
\end{lemma}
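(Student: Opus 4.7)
The plan is to prove the recurrence by conditioning on the position $p$ of the maximum value $m$ in a permutation $\pi \in S_m$. The key observation is that $m$ is always a left-to-right maximum, and after its position no entry can be a left-to-right maximum, so no further selections can possibly occur past position $p$. Another crucial observation is that, for any strategy of this form, the number of selections produced depends only on the relative order (pattern) of the entries, not on the specific values.

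Case 1: $1 \le p \le k_r$. Every left-to-right maximum sits in one of the first $p \le k_r$ positions. The $r^{\text{th}}$ selection, by definition of the $(k_1,\ldots,k_s)$-strategy, must occur at a position strictly greater than $k_r$, and no left-to-right maximum exists there. Hence the strategy makes at most $r-1$ selections on $\pi$ regardless of what fills the remaining positions. The contribution is $k_r \cdot (m-1)!$, since $m$ can occupy any of the $k_r$ positions and the remaining $m-1$ values can be arranged freely.

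Case 2: $k_r < p \le m$. I claim that the strategy makes at most $r-1$ selections on $\pi$ if and only if it makes at most $r-2$ selections on the prefix $\pi|_{p-1}$ when the strategy with thresholds $k_1,\ldots,k_{r-1}$ is applied. For the forward direction: if $\pi|_{p-1}$ caused $r-1$ selections, then upon reaching position $p > k_r$ we have all $r-1$ earlier selections completed and $m$ is a left-to-right maximum past threshold $k_r$, so the $r^{\text{th}}$ selection would trigger, contradicting the assumption. Conversely, if $\pi|_{p-1}$ produces $j \le r-2$ selections under thresholds $k_1,\ldots,k_{j}$ (chained with the waiting rule), then at position $p$ the strategy performs its $(j+1)^{\text{th}}$ selection on $m$ (since $p > k_r \ge k_{j+1}$) and, having exhausted all left-to-right maxima beyond $p$, terminates with $j+1 \le r-1$ selections.

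Counting Case 2: for each $p \in \{k_r+1, \ldots, m\}$, I (i) choose a pattern $\tau \in S_{p-1}$ for the first $p-1$ positions with at most $r-2$ selections, giving $T_{r-2}(p-1, k_1, \ldots, k_{r-1})$ choices, (ii) choose which $p-1$ values from $\{1, \ldots, m-1\}$ occupy these positions, giving $\binom{m-1}{p-1}$ options, and (iii) arrange the remaining $m-p$ values in positions $p+1,\ldots,m$ arbitrarily, giving $(m-p)!$ options. Using $\binom{m-1}{p-1}(m-p)! = (m-1)!/(p-1)!$ and summing over $p$ produces $(m-1)! \sum_{p=k_r+1}^{m} T_{r-2}(p-1, k_1, \ldots, k_{r-1})/(p-1)!$. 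Adding the two cases yields the stated identity.

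The main subtlety lies in Case 2, in verifying the equivalence between the number of selections on $\pi$ and on $\pi|_{p-1}$; the forward direction is immediate, but the reverse direction needs the monotonicity $k_{j+1} \le k_r$ to ensure the strategy actually accepts $m$ at position $p$ as the next selection. Everything else is routine bookkeeping.
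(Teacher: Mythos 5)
Your proposal is correct and follows essentially the same argument as the paper: condition on the position $p$ of the maximum value $m$, note that positions $p \le k_r$ contribute $k_r\cdot(m-1)!$ since the $r^{\text{th}}$ selection can never trigger, and for $p > k_r$ require that the prefix of length $p-1$ admits at most $r-2$ selections, yielding the factor $\binom{m-1}{p-1}\,T_{r-2}(p-1,k_1,\ldots,k_{r-1})\,(m-p)! = \frac{(m-1)!}{(p-1)!}\,T_{r-2}(p-1,k_1,\ldots,k_{r-1})$. Your more careful justification of the equivalence in Case 2 (both directions, using $k_{j+1}\le k_r < p$) is a welcome elaboration of what the paper states only briefly.
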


\begin{proof}
There are two cases.


\textbf{Case 1:} The value $m$ is at a position $i \in [1, k_r]$, then (1) at most $r-1$ selections can be made before position $i+1$ since $k_r+1$ is the first position that we can start making the $r^{\text{th}}$ selection and (2) no further selections can be made after position $i$ since there is no left-to-right maximum after we have seen the best applicant (value $m$) at position $i$. This case contributes $k_r \cdot (N-1)!$.


\textbf{Case 2:} The value $m$ is at a position $i \in [k_r+1, m]$. Then we require at most $r-2$ selections can be made from positions $[1, i-1]$, since otherwise either $r$ selections was already made before position $i$ or we make the $r^{\text{th}}$ selection at position $i$. For each case when the value $m$ is at position $i \in [k_r+1, m]$, we first choose $i-1$ values from $\{1, \ldots, m-1\}$ to be placed at the first $i-1$ positions, the first $i-1$ positions (with its relative ordering) form a $(r-2)$-choosable permutation of length $i-1$ using the $(k_1, \ldots, k_r)$-strategy, and there is no restriction on the last $m-i$ positions. Therefore, each $i \in [k_r+1, m]$ contributes $${m-1 \choose i-1} \cdot T_{r-2}(i-1, k_1, \ldots, k_r) \cdot (m-i)! \quad \text{i.e.} \quad \frac{(m-1)!}{(i-1)!} \cdot T_{r-2}(i-1, k_1, \ldots, k_{r-1}). \qedhere$$
\end{proof}

\begin{lemma}\label{rec-2}
Let $0 \le k_1 \le k_2 \le \ldots \le k_s \le N$ be fixed. Let $m \ge k_{r}+1$ and $1 \le r \le s$. We claim that the following recurrence relation holds:
\begin{equation}
W(m, k_1, \ldots, k_r) = T_{r-1}(m-1, k_1, \ldots, k_r) + (m-1) \cdot W(m-1, k_1, \ldots, k_r).
\end{equation}
\end{lemma}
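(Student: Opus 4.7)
The plan is to establish the recurrence by conditioning on whether the value $m$ (the best applicant) sits at the last position of $\pi\in S_m$ or at an earlier position, and by exhibiting a natural deletion bijection in the second case.

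\textbf{Case A: value $m$ is at position $m$.} Since $m > k_r \ge k_i$ for every $i \in \{1,\ldots,r\}$, the $(k_1,\ldots,k_r)$-strategy will accept the entry at position $m$ (which is a left-to-right maximum, being the global max) provided at least one selection remains after processing the first $m-1$ positions. Because the strategy's decisions depend only on the relative order of the observed values, the prefix $\pi|_{m-1}$ is naturally identified with a permutation $\tau\in S_{m-1}$, and value $m$ is selected under $\pi$ if and only if $\tau$ uses at most $r-1$ selections under the $(k_1,\ldots,k_r)$-strategy, i.e., $\tau$ is $(r-1)$-choosable. This case therefore contributes $T_{r-1}(m-1,k_1,\ldots,k_r)$ to $W(m,k_1,\ldots,k_r)$, and the condition $m \ge k_r+1$ of the lemma is precisely what is needed to invoke the preceding definitions of $T_{r-1}$.

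\textbf{Case B: value $m$ is at some position $p < m$.} I will use the map $\Phi\colon \pi \mapsto \pi'$ that deletes the entry at position $m$ of $\pi \in S_m$ (a value $v \in \{1,\ldots,m-1\}$) and relabels the remaining $m-1$ values in an order-preserving fashion to obtain $\pi' \in S_{m-1}$. Under $\Phi$, the value $m$ of $\pi$ (still the global maximum after removing $v$) is sent to the value $m-1$ of $\pi'$, located at the same position $p$, and the relative order among the first $m-1$ positions is unchanged. Since the $(k_1,\ldots,k_r)$-strategy is determined solely by positional indices and left-to-right maxima, it makes selections at exactly the same positions when applied to $\pi|_{m-1}$ and to $\pi'$; hence value $m$ is selected in $\pi$ if and only if value $m-1$ is selected in $\pi'$. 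The map $\Phi$ is $(m-1)$-to-$1$ on this case because the deleted last value of $\pi$ can be any of the $m-1$ elements of $\{1,\ldots,m-1\}$ (it cannot equal $m$, which is at position $p < m$). This case therefore contributes $(m-1)\cdot W(m-1,k_1,\ldots,k_r)$.

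Summing the two disjoint cases yields the claimed identity. The only point requiring care, and the main obstacle to a clean write-up, is the verification in Case B that the strategy genuinely acts identically on $\pi|_{m-1}$ and on $\pi'$; this reduces to the observation that order-preserving relabelling preserves both the set of left-to-right maxima and the positional thresholds $k_1,\ldots,k_r$ that drive the strategy's decisions, so the same sequence of acceptances and rejections occurs.
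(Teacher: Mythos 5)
Your proof is correct and follows essentially the same route as the paper: both condition on whether the last position of $\pi \in S_m$ holds the value $m$ (contributing $T_{r-1}(m-1,k_1,\ldots,k_r)$) or a smaller value (contributing $(m-1)\cdot W(m-1,k_1,\ldots,k_r)$). Your Case B merely makes explicit the $(m-1)$-to-$1$ deletion map that the paper leaves implicit.
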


\begin{proof}
There are two cases depending on the value on the last position.

\textbf{Case 1:} The last position is $m$. In this case, we require that at most $r-1$ selections was made from position $1$ to $m-1$, since otherwise we do not have enough selections left to choose the best applicant at the last position. This case contributes $T_{r-1}(m-1, k_1, \ldots, k_r)$.

\textbf{Case 2:} The last position is a value from $\{1,2, \ldots, m-1\}$. Then we must select the best applicant (value $m$) from position $1$ to $m-1$. This case contributes $ (m-1) \cdot W(m-1, k_1, \ldots, k_r)$.
\end{proof}

\subsection{Solving the recurrence relations}
For each $1 \le r \le s$, we know that $T_{r-1}(m, k_1, \ldots, k_r) = m!$ for each $m \le k_r$, since we can make at most $r-1$ selections by the definition of the $(k_1, \ldots, k_r)$-strategy. Furthermore, for $r = 1$ and $m \ge k_1$, we know $T_0(m, k_1) = k_1 \cdot (m-1)!$, since it corresponds to the cases that the best applicant is located on $[1, k_r]$.

\begin{lemma}\label{sol-1}
Let $1 \le r \le s$ and $m \ge k_r + 1$. We have 
$$\frac{T_{r-1}(m, k_1, \ldots, k_r)}{m!} = \frac{1}{m} \cdot (k_r + k_{r-1} \cdot \sum\limits_{i = k_r}^{m-1} \frac{1}{i}+ k_{r-2} \cdot \sum\limits_{i_1 = k_r}^{m-1} \frac{1}{i_1} \sum\limits_{i_2 = k_{r-1}}^{i_1-1} \frac{1}{i_2}  + \ldots +$$
$$k_1 \cdot \sum\limits_{i_1 = k_r}^{m-1} \frac{1}{i_1} \cdot \sum\limits_{i_2 = k_{r-1}}^{i_1-1} \frac{1}{i_2} \sum \cdots \sum\limits_{i_{r-1} = k_2}^{i_{r-2}-1} \frac{1}{i_{r-1}}).$$
\end{lemma}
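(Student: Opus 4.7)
The plan is to prove the identity by induction on $r$, using the recurrence of Lemma~\ref{rec-1} as the driving relation, with the observation that the formula has exactly the structure of an iterated ``sum from the right threshold.''

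For the base case $r = 1$ and $m \ge k_1 + 1$, I would invoke the boundary identity $T_0(m, k_1) = k_1 \cdot (m-1)!$ observed just before the statement; this gives $T_0(m, k_1)/m! = k_1/m$, which matches the claimed formula (only the leading $k_r = k_1$ term is present, with no inner sums).

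For the inductive step, assume the formula holds for $r-1$, i.e., for all $i-1 \ge k_{r-1}$ one has a closed form for $T_{r-2}(i-1, k_1, \ldots, k_{r-1})/(i-1)!$. Divide both sides of the recurrence of Lemma~\ref{rec-1} by $m!$:
\begin{equation*}
\frac{T_{r-1}(m, k_1, \ldots, k_r)}{m!} \;=\; \frac{k_r}{m} \;+\; \frac{1}{m} \sum_{i = k_r+1}^{m} \frac{T_{r-2}(i-1, k_1, \ldots, k_{r-1})}{(i-1)!}.
\end{equation*}
Since $k_{r-1} \le k_r$, every index $i$ in the range $[k_r+1, m]$ satisfies $i-1 \ge k_{r-1}$, so the inductive hypothesis applies to every summand. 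Substituting the inductive hypothesis and performing the change of index $i_1 = i-1$ (so that $i_1$ ranges over $[k_r, m-1]$ and the factor $1/(i-1)$ in front of the inductive formula becomes $1/i_1$) yields exactly the right-hand side of the claimed identity: the outer $k_{r-1}\sum_{i_1=k_r}^{m-1} 1/i_1$ term comes from the leading $k_{r-1}/(i-1)$ of each inductive expansion, while the deeper nested sums come from the deeper nested sums of the inductive formula, with each level's lower limit correctly inherited from the hypothesis.

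The step most likely to cause friction is the bookkeeping of the index shifts and the verification that the inner lower limits line up. Concretely, one must check that the inductive formula, which has outermost index running from $k_{r-1}$ to $i-2$, produces after the shift $i_1 = i-1$ an outermost index $i_2$ running from $k_{r-1}$ to $i_1 - 1$, which is precisely what appears in the target identity. Apart from this careful relabelling, the proof is a direct substitution and no further manipulation is needed.
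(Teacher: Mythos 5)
Your proposal is correct and follows essentially the same route as the paper: induction on $r$ with base case $T_0(m,k_1)=k_1\cdot(m-1)!$, then substitution of the inductive formula into the recurrence of Lemma~\ref{rec-1} (the paper simply omits the index bookkeeping you spell out). The only point worth adding is the edge summand $i-1=k_r=k_{r-1}$, where the stated hypothesis ($m\ge k_{r-1}+1$) does not literally apply and one instead uses the boundary value $T_{r-2}(k_{r-1},\ldots)=k_{r-1}!$, which coincides with the formula since all inner sums are then empty.
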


\begin{remark}
In our notation, if the upper limit is less than the lower limit in a summation, then the corresponding term in the summation does not exist. For example, if $k_r = k_{r-1}$ then $k_r - 1 < k_{r-1}$ and the inner summation $\sum\limits_{i_2 = k_{r-1}}^{i_1-1} \frac{1}{i_2}$ will skip the case when $i_1 = k_r$ and will start with $i_1 = k_r+1$ in $\sum\limits_{i_1 = k_r}^{m-1} \frac{1}{i_1} \sum\limits_{i_2 = k_{r-1}}^{i_1-1} \frac{1}{i_2}$.
\end{remark}

\begin{proof}
The proof follows by induction. We already knew the case when $r = 1$ holds. Assume the result is true for all number of selections less than $r$, where $2 \le r \le s$. We show that it holds for $r$ as well. By Lemma~\ref{rec-1}, $T_{r-1}(m,k_1, \ldots, k_r)$ can be expressed in terms of $T_{r-2}(i, k_1, \ldots, k_{r-1})$, where $k_r \le i \le m-1$. Therefore, by using the formula for $T_{r-2}(i, k_1, \ldots, k_{r-1})$, guaranteed by the inductive hypothesis, we obtain the claimed formula for $\frac{T_{r-1}(m, k_1, \ldots, k_r)}{m!}$. The lengthy computations are omitted.
\end{proof}

\begin{theorem}
Let $s \ge 1$ be fixed and $N \ge k_s+1$. Then 
$$\frac{W(N, k_1, \ldots, k_s)}{N!} = \frac{1}{N} \cdot \Bigg( \Big(k_1 \cdot \sum\limits_{i = k_1}^{k_2-1} \frac{1}{i}  \Big)  +  \Big(k_2 \cdot \sum\limits_{i = k_2}^{k_3-1} \frac{1}{i} +  k_1 \cdot \sum\limits_{i_1 = k_2+1}^{k_3-1} \frac{1}{i_1} \sum\limits_{i_2=k_2}^{i_1-1} \frac{1}{i_2}  \Big) + $$
$$\Big(k_3 \cdot \sum\limits_{i=k_3}^{k_4-1} \frac{1}{i} + k_2 \cdot \sum\limits_{i_1 = k_3+1}^{k_4-1} \frac{1}{i_1} \sum\limits_{i_2 = k_3}^{i_1-1} \frac{1}{i_2} + k_1 \cdot \sum\limits_{i_1 = k_3+1}^{k_4-1} \frac{1}{i_1} \sum\limits_{i_2 = k_3}^{i_1-1} \frac{1}{i_2} \sum\limits_{i_3 = k_2}^{i_2-1} \frac{1}{i_3}  \Big)
$$
$$ + \ldots +  $$
\begin{equation}\label{winningprob}
 \Big( k_s \cdot \sum\limits_{i = k_s}^{N-1} \frac{1}{i} + k_{s-1} \cdot \sum\limits_{i_1 = k_s+1}^{N-1} \frac{1}{i_1} \sum\limits_{i_2 = k_s}^{i_1-1} \frac{1}{i_2} + \ldots + k_1 \cdot \sum\limits_{i_1 = k_s+1}^{N-1} \frac{1}{i_1} \sum\limits_{i_2 = k_s}^{i_1-1} \frac{1}{i_2} \cdots \sum\limits_{i_s = k_2}^{i_{s-1}-1} \frac{1}{i_s} \Big) \Bigg).
\end{equation}
\end{theorem}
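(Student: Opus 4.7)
The plan is to unroll the recurrences of Lemma~\ref{rec-1} and Lemma~\ref{rec-2} by introducing the normalized quantities
\[
F_r(m) := \frac{W(m, k_1, \ldots, k_r)}{(m-1)!}, \qquad t_r(j) := \frac{T_{r-1}(j, k_1, \ldots, k_r)}{j!},
\]
so that the target probability equals $F_s(N)/N$. Dividing the identity of Lemma~\ref{rec-2} by $(m-1)!$ converts it into the first-order telescoping recurrence $F_r(m) = F_r(m-1) + t_r(m-1)$ valid for $m \ge k_r+1$, which iterates to
\[
F_r(m) = F_r(k_r) + \sum_{j=k_r}^{m-1} t_r(j).
\]

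Next I would establish the boundary identity $F_r(k_r) = F_{r-1}(k_r)$. This is a short combinatorial observation: on a permutation of length exactly $k_r$ the $(k_1, \ldots, k_r)$-strategy and the $(k_1, \ldots, k_{r-1})$-strategy act identically, because the $r$-th selection can only be attempted after rejecting the first $k_r$ applicants and no applicant remains at that point; hence the sets of permutations on which the best applicant gets chosen coincide. Combined with the base case $F_1(k_1) = 0$ (the classical one-threshold strategy on exactly $k_1$ applicants never selects anyone), chaining the telescoping identity across $r = 1, 2, \ldots, s$ and setting $k_{s+1} := N$ yields the compact form
\[
F_s(N) = \sum_{r=1}^{s} \sum_{j=k_r}^{k_{r+1}-1} t_r(j).
\]

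Finally I would substitute the closed form for $t_r(j)$ supplied by Lemma~\ref{sol-1} into each inner sum. For fixed $r$, the outer sum $\sum_{j=k_r}^{k_{r+1}-1} t_r(j)$ distributes over the $r$ summands inside the parenthesised expression of Lemma~\ref{sol-1}; after relabelling the outer index $j$ as $i_1$ one recovers precisely the $r$-th parenthesised block of~\eqref{winningprob}. The one subtlety is that when $j = k_r$ the innermost sum $\sum_{i=k_r}^{j-1} \tfrac{1}{i}$ of Lemma~\ref{sol-1} is empty by the stated convention, so every contribution involving $k_{r-1}, k_{r-2}, \ldots, k_1$ effectively begins at $i_1 = k_r+1$, while the leading $k_r$ contribution begins at $i_1 = k_r$; this is exactly the index pattern visible in~\eqref{winningprob}. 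Dividing by $N$ produces the stated expression for $W(N, k_1, \ldots, k_s)/N!$.

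The main obstacle is not conceptual but careful index bookkeeping during the final substitution, together with a concise justification of the boundary identity $F_r(k_r) = F_{r-1}(k_r)$, which is the one place where genuine combinatorial content enters the argument. Everything else reduces to iterating a first-order linear recurrence, with the algebraic heavy-lifting already packaged in Lemmas~\ref{rec-2} and~\ref{sol-1}.
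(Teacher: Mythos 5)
Your proposal is correct and follows essentially the same route as the paper: unrolling the recurrence of Lemma~\ref{rec-2} down to the boundary $m=k_s$, using the identity $W(k_r,k_1,\ldots,k_r)=W(k_r,k_1,\ldots,k_{r-1})$ to drop a threshold, and substituting the closed form of $T_{r-1}$ from Lemma~\ref{sol-1}; the paper phrases this as an induction on $s$ while you chain the telescoping explicitly, which is the same computation. Your normalized quantities $F_r$ and $t_r$ and the remark about the $j=k_r$ boundary term actually supply the index bookkeeping that the paper dismisses as ``lengthy computations,'' so nothing is missing.
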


\begin{proof}
We prove it by induction. The case when $s = 1$ holds true since it is a corollary of a result of Jones~\cite{jones2020weighted}. Assume the claim holds for $W(N, k_1, \ldots, k_r)/N!,$ where $r < s$. We prove the result holds for $r = s$. 

By Lemma~\ref{rec-2}, $$W(N, k_1, \ldots, k_s) = (N-1) \cdot W(N-1, k_1, \ldots, k_s) + T_{s-1}(N-1, k_1, \ldots, k_s)$$
$$= (N-1) \cdot ((N-2) \cdot W(N-2, k_1, \ldots, k_s) + T_{s-1}(N-2, k_1, \ldots, k_s)) + T_{s-1}(N-1, k_1, \ldots, k_s) = \ldots = $$
$$\frac{(N-1)!}{(k_s-1)!}\cdot W(k_s, k_1, \ldots, k_{s-1}, k_s) + \sum\limits_{i = 1}^{N-k_s} \frac{(N-1)!}{(N-i)!} \cdot T_{s-1}(N-i, k_1, \ldots, k_s)=$$
$$\frac{(N-1)!}{(k_s-1)!}\cdot W(k_s, k_1, \ldots, k_{s-1}) + \sum\limits_{i = 1}^{N-k_s} \frac{(N-1)!}{(N-i)!} \cdot T_{s-1}(N-i, k_1, \ldots, k_s).$$

By the inductive hypothesis, we can use the formula for $W(k_s, k_1, \ldots, k_{s-1})$, and the formula of \\$T_{s-1}(N-i, k_1, \ldots, k_s)$ in Lemma~\ref{rec-1} to establish the claim. The lengthy computations are omitted.
\end{proof}

By Corollary~\ref{fixed}, we know that an optimal strategy is right hand based. Let $s \ge 1$ be fixed. We focus on $$\frac{W(N, a_s, a_{s-1}, \ldots, a_1)}{N!}.$$ 
Let $$H_1 =  \frac{a_1}{N} \cdot \sum\limits_{i = a_1}^{N-1} \frac{1}{i} ,\quad H_2 = \frac{a_2}{N} \cdot (  \sum\limits_{i = a_2}^{a_1-1} \frac{1}{i} + \sum\limits_{i_1 = a_1+1}^{N-1} \frac{1}{i_1} \sum\limits_{i_2=a_1}^{i_1-1} \frac{1}{i_2}),$$
$$H_3 = \frac{a_3}{N} \cdot (\sum\limits_{i=a_3}^{a_2-1} \frac{1}{i} +  \sum\limits_{i_1 = a_2+1}^{a_1-1} \frac{1}{i_1} \sum\limits_{i_2 = a_2}^{i_1-1} \frac{1}{i_2} +  \sum\limits_{i_1 = a_1+1}^{N-1} \frac{1}{i_1} \sum\limits_{i_2 = a_1}^{i_1-1} \frac{1}{i_2} \sum\limits_{i_3 = a_2}^{i_2-1} \frac{1}{i_3})$$
$$\ldots $$
$$H_s = \frac{a_s}{N} \cdot (\sum\limits_{i = a_s}^{a_{s-1}-1} \frac{1}{i} +  \sum\limits_{i_1 = a_{s-1}+1}^{a_{s-2}-1} \frac{1}{i_1} \sum\limits_{i_2 = a_{s-1}}^{i_1-1} \frac{1}{i_2} + \sum\limits_{i_1 = a_{s-2}+1}^{a_{s-3}-1} \frac{1}{i_1} \sum\limits_{i_2 = a_{s-2}}^{i_1-1} \frac{1}{i_2} \sum\limits_{i_3 = a_{s-1}}^{i_2-1} \frac{1}{i_3} + \ldots + $$
$$\sum\limits_{i_1 = a_1+1}^{N-1} \frac{1}{i_1} \sum\limits_{i_2 = a_1}^{i_1-1} \frac{1}{i_2} \sum\limits_{i_3 = a_2}^{i_2-1} \frac{1}{i_3}\cdots \sum\limits_{i_s = a_{s-1}}^{i_{s-1}-1} \frac{1}{i_s})).$$ 

By rearranging equation~\eqref{winningprob}, we obtain that $$\frac{W(N, a_s, \ldots, a_1)}{N!} = \sum\limits_{i=1}^{s} H_i.$$

\vspace{-8mm}
\subsection{the asymptotic case ($N \to \infty$)}
Let $N \to \infty$ and define $x_i = \lim\limits_{N \to \infty} \frac{a_i}{N}$. For $0 \le x_s \le x_{s-1} \le \ldots \le x_1 \le 1$, and $1 \le r \le s$,
$$H_r \to x_r \cdot (\int\limits_{x_r}^{x_{r-1}} \frac{1}{t} \,dt +  \int\limits_{x_{r-1}}^{x_{r-2}} \frac{1}{t_1} \int\limits_{x_{r-1}}^{t_1} \frac{1}{t_2} \,dt_2\,dt_1 + \int\limits_{x_{r-2}}^{x_{r-3}} \frac{1}{t_1} \int\limits_{x_{r-2}}^{t_1} \frac{1}{t_2} \int\limits_{x_{r-1}}^{t_2} \frac{1}{t_3} \,dt_3\,dt_2\,dt_1  + \ldots +$$
$$ \int\limits_{x_1}^{1} \frac{1}{t_1} \int\limits_{x_1}^{t_1} \frac{1}{t_2} \int\limits_{x_{2}}^{t_2} \cdots \int\limits_{x_{r-1}}^{t_{r-1}} \frac{1}{t_r} \,dt_r\,dt_{r-1} \cdots \,dt_1) =: H_r'$$
$$\lim\limits_{N \to \infty}\frac{W(N, a_s, \ldots, a_1)}{N!} = \sum\limits_{r = 1}^{s} H_r' =: P.$$

By this definition, and for $1 \le i \le s$, the optimal probability is a function of $x_1, \ldots, x_s$ and does not depend on $N$. By Corollary~\ref{fixed}, we find the ratio $x_1$ for the case $s = 1$, then we find $x_2$ for the case $s=2$, $\ldots$, finally we find $x_s$ for the case of $s$ selections allowed. An optimal strategy has to satisfy that if: (i) $a_i = o(N)$, then $x_i = 0$; (ii) $N-a_i = o(N)$, then $x_i = 1$; (iii) and, otherwise, $\lim\limits_{N \to \infty} \frac{a_i}{N} = x_i \in (0,1)$. Next, we show that only (iii) is possible, i.e., each $x_i \in (0,1)$, for $1 \le i \le s$.

Let $$\mathcal{I}_{r-1} = \int\limits_{x_{r-1}}^{x_{r-2}} \frac{1}{t_1} \int\limits_{x_{r-1}}^{t_1} \frac{1}{t_2} \,dt_2\,dt_1 + \int\limits_{x_{r-2}}^{x_{r-3}} \frac{1}{t_1} \int\limits_{x_{r-2}}^{t_1} \frac{1}{t_2} \int\limits_{x_{r-1}}^{t_2} \frac{1}{t_3} \,dt_3\,dt_2\,dt_1  + \ldots + \int\limits_{x_1}^{1} \frac{1}{t_1} \int\limits_{x_1}^{t_1} \frac{1}{t_2} \int\limits_{x_{2}}^{t_2} \cdots \int\limits_{x_{r-1}}^{t_{r-1}} \frac{1}{t_r} \,dt_r \,dt_{r-1} \cdots \,dt_1.$$ 

We know that $\mathcal{I}_{r-1}$ is a constant when $x_1, \ldots, x_{r-1}$ are given and the optimal value of $x_r$ which realizes the maximum value of $P$ equals $x_{r-1} \cdot e^{\mathcal{I}_{r-1}-1}$. Thus, we can compute $x_{r-1}$ and $\mathcal{I}_{r-1}$ sequentially.

We computed the optimal thresholds for the case of up to five selections and their corresponding  probabilities of success. When $N$ is large, by Corollary~\ref{fixed}, an optimal strategy for the case of $s$ selections is the $(x_s \cdot N, \ldots, x_1 \cdot N)$-strategy: We reject the first $x_s \cdot N$ applicants and then select the first left-to-right maximum applicant thereafter; for the second selection, we wait until after the $x_{s-1} \cdot N^{\text{th}}$ position and then select the left-to-right maximum after the first selection made, $\ldots$, for the $s^{\text{th}}$ selection, we wait until after the $x_1 \cdot N^{\text{th}}$ position and then select the left-to-right maximum after the $(s-1)^{\text{th}}$ selection is made. The results for $s \leq 5$ are shown in Table~\ref{uniform}.

\begin{center}
\begin{table}
\begin{tabular}{|c|c|c|c|c|c|c|c|c}
\hline
 & $x_1$ & $x_2$ & $x_3$ & $x_4$ & $x_5$ & $\ldots$  \\ \hline

Thresholds & 0.3678794412& 0.2231301601 & 0.1410933807 & 0.0910176906   &  0.0594292419  & \ldots \\ \hline
 
Optimal probability & 0.3678794412&0.5910096013&0.7321029820& 0.8231206726&0.8825499146 & \ldots \\ \hline
\end{tabular}
\caption{The optimal thresholds (threshold ratios) and success probabilities.}\label{uniform}
\vspace{-5mm}
\end{table}
\end{center}

\vspace{-15mm}

\subsection{The expected stopping position}

The expected position of stopping divided by $N$ (ESR in short) for both the Dowry model and our query-base model are $0.7357 N$ when $s=1$. However, when $s$ is large, the two models behave very differently. In the Dowry model, ESR approaches $1$ as $s$ becomes large. In the query-based model, ESR approaches $0.5$ as $s \to \infty$. This follows since when $s$ is sufficiently large, we have a success probability close to $1$ and will stop at the position at which the value $N$ appears if this is identified during the $s-1$ queries (even for $s = 6$ we will have a probability $> 0.9$ of identifying the best applicant). Thus, ESR $\to 0.5,$ the expected position of $N$. However, in the Dowry model, the probability of successfully identifying the best applicant during the first $s-1$ selections $\to 1$ as $s$ increases. Furthermore, since we have no information about whether the selected applicant is the best or not, we try our best to use all selections and thus have probabilities $\to 1$ of stopping at the end of the list. Thus, $ESR \to 1$ as $N \to \infty$ in the Dowry model.

\vspace{3mm}
\noindent \textbf{Acknowledgment.} The work was supported in part by the NSF CCF 15-26875 and CIF\,1513373 grants.


\begin{thebibliography}{99}\setlength{\itemsep}{-0.001mm}
\bibitem{AKB1}
H. Ashtiani, S. Kushagra, S. Ben-David, ``Clustering with same-cluster queries,'' \emph{Advances in Neural Information Processing Systems (NIPS)}, pp. 3224--3232, 2016.

\bibitem{BIK1}
M. Babaioff, N. Immorlica, R. Kleinberg,  ``Matroids, secretary problems, and online mechanisms'' {\em ACM-SIAM Symposium on Discrete Algorithms (SODA)}, pp. 434--443, 2007.


\bibitem{BFGOR1}
L. Bay\'on, P. Fortuny Ayuso, J.M. Grau, A.M. Oller-Marc\'en and M. M. Ruiz, ``The Best-or-Worst and the Postdoc problems'' {\em J Comb Optim}, vol. 35, pp. 703--723, 2018. 

\bibitem{BFGOR2}
L. Bay\'on, P. Fortuny Ayuso, J.M. Grau, A.M. Oller-Marc\'en and M. M. Ruiz, ``The Best-or-Worst and the Postdoc problems with random number of candidates'' {\em J Comb Optim}, vol. 38, pp. 86--110, 2019.

\bibitem{CPM1}
I. Chien, C. Pan, and O. Milenkovic, ``Query k-means clustering and the double dixie cup problem,'' \emph{Advances in Neural Information Processing Systems (NeurIPS)}, 31, pp. 6649-6658, 2018.


\bibitem{CJMTUW1}
M. Crews, B. Jones, K. Myers, L. Taalman, M. Urbanski and B. Wilson, ``Opportunity Costs in the Game of Best Choice, '' {\em The Electronic Journal of Combinatorics}, vol. 26, no. 1, \#P1.45, 2019.

\bibitem{D1}
E.B. Dynkin, ``The optimal choice of the stopping moment for a Markov process,'' {\em Dokl. Akad. Nauk. SSSR}, vol. 150, pp. 238--240, 1963.

 

\bibitem{FJ19}
A. Fowlkes and B. Jones, Positional strategies in games of best choice, {\em Involve}, vol. 12, no. 4, 647--658, 2019.
MR 3941603
 
\bibitem{F2}
P. R. Freeman, ``The secretary problem and its extensions - A review,'' {\em Internat. Statist. Rev.} vol.51, pp. 189--206, 1983.
 
\bibitem{FW1}
R. Freij, J. Wastlund, ``Partially ordered secretaries,'' {\em Electron Commun Probab}, vol. 15, pp. 504--507, 2010.

\bibitem{G1}
M. Gardner, ``Mathematical games'', {\em Scientific American}, vol. 202, no. 2, pp. 152, 1960a.

\bibitem{G2}
M. Gardner, ``Mathematical games'', {\em Scientific American}, vol. 202, no. 3, pp. 178--179, 1960b.

\bibitem{GM1}
B. Garrod and R. Morris, ``The secretary problem on an unknown poset,'' {\em Random Structures \& Algorithms}, vol. 43, pp. 429--451, 2012.

\bibitem{GKMN1}
N. Georgiou, M. Kuchta, M. Morayne, J. Niemiec, ``On a universal best choice algorithm for partially ordered sets'', {\em Random Structures \& Algorithms}, vol. 32, pp. 263--273, 2008.
 
\bibitem{GM2}
J. Gilbert and F. Mosteller, ``Recognizing the maximum of a sequence,'' {\em J. Amer. Statist. Assoc.}, vol. 61, pp. 35--73, 1966.




\bibitem{jones2019}
B. Jones, ``Avoiding patterns and making the best choice,'' {\em Discrete Mathematics}, vol. 342, no. 6, pp.1529-1545, 2019.

\bibitem{jones2020weighted}
B. Jones, ``Weighted games of best choice,'' {\em SIAM Journal on Discrete Mathematics}, vol. 34, no. 1, pp. 399--414, 2020.

\bibitem{KS}
U. Krengel and L. Sucheston, ``On semiamarts, amarts, and processes with finite value'', {\em Probability on Banach spaces, Adv. Probab. Related Topics}, vol. 4, pp. 197--266. Dekker, New York, 1978.

\bibitem{L1}
D.V. Lindley, ``Dynamic programming and decision theory,'' {\em Appl. Statist.}, vol.10, pp. 39--52, 1961. 

\bibitem{LM1} 
X. Liu and O. Milenkovic, ``The Postdoc Problem under the Mallows Model,'' 2021 IEEE International Symposium on Information Theory (ISIT), 2021, pp. 3214--3219.

\bibitem{LM2}
X. Liu and O. Milenkovic, ``Finding the second-best candidate under the Mallows model'', {\em Theoretical Computer Science}, vol. 929, 39-68, 2022. 

\bibitem{MS1}
A. Mazumdar and B. Saha, ``Clustering with noisy queries,'' {\em Advances in Neural Information Processing Systems (NIPS)}, pp. 5788-5799, 2017.

\bibitem{MLM1}
G. Moustakides, X. Liu, and O. Milenkovic, ``Optimal Stopping Methodology
for the Secretary Problem with Random Queries,'' https://arxiv.org/pdf/2107.07513.pdf.


































\bibitem{R1}
J. S. Rose, ``A problem of optimal choice and assignment,'' {\em Operations Research}, vol. 30, pp. 172--181, 1982.

\bibitem{S1}
M. Sakaguchi, ``Dowry problems and OLA policies,'' {\em Rep. Stat. Appl. Res., Juse}, vol. 25, 124--128, 1978.

\bibitem{soto}
J.A. Soto, ``Matroid secretary problem in the random assignment model'', {\em ACM-SIAM Symposium on Discrete Algorithms (SODA)}, pp. 1275--1284, 2011.


 













\end{thebibliography}
\end{document}